\newcommand{\hide}[1]{}
\newcommand\ackname{Acknowledgements}
  \newenvironment{acknowledgements}{%
      \titlepage
      \null\vfil
      \@beginparpenalty\@lowpenalty
      \begin{center}%
        \bfseries \ackname
        \@endparpenalty\@M
      \end{center}}%
     {\par\vfil\null\endtitlepage}
\begin{abstract}
%  \lipsum[1]
%\end{abstract}
\renewcommand{\AA}{\mathbb A}
\newcommand{\NN}{{\mathbb N}}
\newcommand{\ZZ}{{\mathbb Z}}
\newcommand{\KK}{{\mathbb K}}
\newcommand{\SSS}{{\mathbb S}}
\newcommand{\bdot}{\bullet}
\DeclareMathOperator{\Der}{Der}
\DeclareMathOperator{\diag}{diag}
\DeclareMathOperator{\Spec}{Spec}
\DeclareMathOperator{\Sing}{Sing}
\DeclareMathOperator{\Sym}{Sym}
\DeclareMathOperator{\depth}{depth}
\def\xto{\xrightarrow}
\newtheorem{theorem}{Theorem}[section]
\newtheorem{lemma}[theorem]{Lemma}
\newtheorem{corollary}[theorem]{Corollary}
\newtheorem{proposition}[theorem]{Proposition}
\theoremstyle{definition}
\newtheorem{remark}[theorem]{Remark}
\newtheorem{remarks}[theorem]{Remarks}
\newtheorem{example}[theorem]{Example}
\newtheorem{definition}[theorem]{Definition}
\newtheorem{question}[theorem]{Question}
\begin{document}

\title{New Free Divisors from Old}

\author[R.-O. Buchweitz]{Ragnar-Olaf Buchweitz}
\address{Dept.\ of Computer and Mathematical Sciences, 
University of Tor\-onto at Scarborough, Tor\-onto, Ont.\ M1A 1C4, Canada}
\email{ragnar@utsc.utoronto.ca}

\author[A.Conca]{Aldo Conca}
\address{Dipartimento di Matematica, Universit\'a di Genova, 
Via Dodecaneso 35, I-16146 Genova, Italia}
\email{conca@dima.unige.it}

\thanks{The first author was partly supported by NSERC grant
  3-642-114-80.}

\date{\today}

 \subjclass[2010]{Primary: 
%   13C14, % MCM modules
%   14A22, % non-comm algebraic geometry
%   14E15, % global theory and res'ns of sings
%   14C40, % determinantal ideals
32S25, % Surface and hypersurface singularities 
14J17, %Surfaces and higher-dimensional varieties, singularities
14J70; %Hypersurfaces
Secondary:
%   13D02, % syzygies and res'ns
%   12G50, % rep. theory MCM modules
%   16G20% repr'ns of quivers
14H51,%Special divisors (gonality, Brill-Noether theory)
14B05. % Singularities
}

\keywords{Free divisor, discriminant, Saito matrix, binomial, Euler vector field}

\begin{abstract}
We present several methods to construct or identify families of free divisors such as 
those annihilated by many Euler vector fields, including binomial free divisors, or
divisors with triangular discriminant matrix.
We show how to create families of quasihomogeneous free divisors through the chain 
rule or by extending them into the tangent bundle. We also discuss whether general 
divisors can be extended to free ones by adding components and show that adding a
normal crossing divisor to a smooth one will not succeed.
\end{abstract}

\maketitle
%\dedication{To J\"urgen Herzog on his seventieth birthday.}

{\footnotesize\tableofcontents}

\section{Introduction} 
The goal of this note is to describe some basic operations  that allow to construct new free 
divisors from given ones, and  to classify toric free surfaces and binomial free divisors. We mainly 
deal with weighted homogeneous polynomials over a field of characteristic $0$, though several 
statements and constructions generalize to power series.  

A (formal) {\em free divisor\/} is a reduced polynomial (or power series) $f$ in variables 
$x_1,\dots,x_n$ over a field $K$ such that its Jacobian ideal 
$J(f) = (\tfrac{\partial f}{\partial x_{1}},...,\tfrac{\partial f}{\partial x_{n}})+(f)$ is perfect of 
codimension $2$ in the polynomial or power series ring. 
For generalities about free divisors and their importance in singularity theory 
we refer to, say, \cite{BEvB} and the references therein.  

A  determinantal characterization of free divisors is due to K.~Saito \cite{Sai}: a reduced polynomial 
$f$ is a free divisor if and only if there exists a matrix $A$ of size $n\times n$ with entries in the 
relevant polynomial or power series ring such that $\det(A)=f$ and $(\nabla f) A\equiv 0 \bmod{(f)}$, 
where $\nabla f = (\tfrac{\partial f}{\partial x_{1}},...,\tfrac{\partial f}{\partial x_{n}})$ is the usual 
gradient of $f$. In that case $A$ is called a {\em discriminant\/} (or {\em Saito\/}) {\em matrix\/} 
of the free divisor. 

The normal crossing divisor $f = x_1\cdots x_k$, for some $1\leqslant k\leqslant n$, provides a 
simple example of a free divisor. Indeed, it is an example of a {\em free arrangement}, that is, a 
hyperplane arrangement given by linear equations $\ell_i=0$ such that  the product 
$f=\prod_{i} \ell_i$ is a free divisor, see \cite{OrT} for more on free arrangements.  

Section 2 contains  generalities and notation. In Section 3 we study homogeneous polynomials that
are annihilated by  $n-2$ linearly independent {\em Euler vector fields}, that is, polynomials $f$ such 
that the vector space generated by the linear derivatives $\{x_{i}\partial f/\partial x_{i}\}_{i=1,...,n}$ 
is of dimension at most $2$.  We show that such a polynomial is a free divisor provided the gradient 
$\nabla f$ vanishes as an element of the first homology module of the associated 
{\em Buchsbaum-Rim complex}. As an application, we classify in Theorem \ref{poly3} those 
{\em free surfaces\/} $\{f(x,y,z)=0\}$ that are weighted homogeneous and annihilated by some 
Euler vector field.

In Section 4 we present a {\em composition formula\/} or {\em chain rule\/} for free divisors. Such a 
formula implies, for instance, that if $f$ and $g$ are free divisors in distinct variables then $fg(f+g)$ 
is also a free divisor. 

In Section 5 we exhibit some {\em triangular\/} free divisors, that is, free divisors whose discriminant 
matrix has a triangular form. It follows, for instance, that for natural numbers 
$t\geqslant 1, n\geqslant 2$, the polynomial $\prod_{j=2}^n (x_1^t+\dots+x_j^t)$ is a free 
divisor.

In Section 6 we characterize {\em binomial free divisors\/} by showing that a binomial in $n+2$ 
variables $x_1,\dots,x_n,y,z$ is a free divisor if and only if it is, up to permutation and scaling of the 
variables, of the form 
\[
x_1\cdots x_n y^uz^t\left(y^\alpha\prod x_{i}^{a_i} + z^\beta \prod x_{i}^{b_i}\right)
\] 
with $\min(a_i,b_i)=0$, $\alpha,\beta>0$, and $0\leqslant u,t\leqslant 1$. In particular, any 
reduced binomial is a {\em factor of a free divisor}. 
This observation leads us to ask whether any reduced polynomial is a factor 
of a free divisor. We discuss this question in Section 7, where we show that the simplest approach 
will not work: If $f$ is a smooth form of degree greater than $2$ in more than $2$ variables then 
$x_1\cdots x_nf$ is not a free divisor.

In the final Section 8, we point out that  homogeneous free divisors {\em extend into the tangent 
bundle\/}: along with $f$, the polynomial 
\[
 f(\tfrac{\partial f}{\partial x_{1}}y_{1}+\cdots +\tfrac{\partial f}{\partial x_{n}}y_{n})
\]
in twice as many variables $x_{1},..., x_{n}; y_{1},..., y_{n}$ is again a free divisor. Moreover, it will 
again be {\em linear}, if this holds for $f$.

We want to point out that similar ``extension problems'' for free divisors have been considered by others as well,
especially in \cite{DPi, MS, STo}.

\subsection*{Acknowledgements}
The authors began discussing the results presented here when they met at the
{\sc CIMPA} School on Commutative Algebra,
26 December 2005 to 6 January 2006, in Hanoi, Vietnam.
We want to thank the colleagues who organized that school for the stimulating atmosphere
and generous hospitality.

Special thanks are due to Eleonore Faber who not only produced the pictures included here in Sections 3 and
5, but also provided the (counter-)example in Remark \ref{faber}.
\section{Notation and Generalities} 
Let $R$ be the polynomial ring $K[x_1,\dots,x_n]$ or formal power series ring $K[\![x_1,\dots,x_n]\!]$
over a field $K$ of characteristic $0$. 
Let  $\theta := \theta_{R/K} \cong \oplus_{i=1}^{n}R\partial_{x_{i}}$ denote
the module of  vector fields (or  $K$-linear derivations) of $R$, with 
$\partial_{x_{i}}$ being shorthand for the corresponding partial derivative, 
$\partial_{x_{i}}:= \frac{\partial}{\partial x_{i}}$.
For $f\in R$, we further abbreviate $f_{i}:= f_{x_{i}}:=\partial_{x_{i}}f$, 
so that the {\em gradient\/} of $f$ with respect to the chosen
variables is given by the vector $\nabla f = (f_{1},\dots,f_{n})$ .

\begin{definition}
For $a = (a_{1},\dots,a_{n})\in K^{n}$, we call the linear vector field $E_a=\sum_{i}a_{i}x_{i}
\partial_{x_{i}}$ the {\em Euler vector field\/}  associated to  $a$. 
It is an {\em Euler vector field for $f$}, if $E_a(f)=\delta f$, for some $\delta\in\ZZ$. 

A vector $w\in \ZZ^n$ induces naturally a $\ZZ$-grading on $K[x_1,\dots,x_n]$ 
by setting $\deg_w x_{i}=w_i$. Accordingly, one can assign to any non-zero polynomial $f$ 
a degree $\deg_w(f)$, and that polynomial is {\em $w$--homogeneous}, that is, homogeneous with 
respect to this grading, if all the nonzero monomials in $f$ are of degree $\deg_w(f)$. 
If $f\in R$ is  $w$-homogeneous,  then  $E_w(f)=\deg_w(f)f$.  
\end{definition}

The Jacobian%
\footnote{Some authors; see e.g.~\cite[p.110]{GLS}; call this the {\em Tjurina ideal\/} to distinguish it 
clearly from the ideal generated by just the partial derivatives that describes the {\em critical locus\/} 
of the map defined by $f$.}
ideal $J(f)$ of $f$ is, by definition, $(f_1,\dots,f_n)+(f)\subseteq R$. 
Note that $J(f)=(f_1,\dots,f_n)$ precisely when there exists a derivation $D\in \theta$ such that 
$D(f)=f$. This happens, for example, if $f$ is homogeneous  of non-zero degree with respect  to 
some weight $w\in \ZZ^n$. It is well known that, in general, $J(f)$ defines the {\em singular locus\/}
of the hypersurface ring $R/(f)$, equivalently, the hypersurface $\{f=0\}$ in affine $n$--space 
$\AA^{n}_{K}$. 

\begin{definition} A (formal) {\em free divisor\/} is a polynomial (or power series) $f$, 
whose Jacobian ideal $J(f)$ is {\em perfect\/}%
\footnote{We allow the ideal to be improper, thus, the empty set is perfect of any codimension.
However, the zero ideal is, by convention, not perfect of any codimension, and we always assume $f\neq 0$.}
of codimension $2$ in $R$.

In particular, $f$ is then {\em squarefree}, equivalently, the hypersurface ring $R/(f)$ is 
{\em reduced\/}, --- and we then simply also call $f$ {\em reduced} --- and the singular locus 
of that hypersurface is a Cohen-Macaulay subscheme of codimension two in $\Spec R$.
\end{definition} 

\begin{example}
As simplest examples, any separable polynomial in $K[x]$ defines a free divisor, and so does 
any reduced $f\in K[x,y]$.
\end{example}

K.~Saito, who introduced the notion, gave the following important criterion for  $f$ to be a free divisor: 

\begin{theorem}{\sc (Saito \cite{Sai})}
\label{saito}
Let $f\in R$ be reduced. Then $f$ is a free divisor if and only if there exists a $n\times n$ matrix $A$ 
with entries in $R$ such that $\det A=f$ and $(\nabla f)A\equiv 0 \bmod (f)$. 
\end{theorem}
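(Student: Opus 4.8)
The plan is to prove Saito's criterion (Theorem \ref{saito}) by reformulating freeness in terms of the module of logarithmic derivations $\Der(-\log f) = \{D\in\theta : D(f)\in (f)\}$ and then applying the standard Buchsbaum--Eisenbud criterion for when a complex is exact. First I would recall that $f$ is a free divisor precisely when $J(f)$ is perfect of codimension $2$, equivalently (by the Auslander--Buchsbaum formula and the Hilbert--Burch theorem) when $J(f)$ has a free resolution of length $2$; but there is a cleaner equivalent: $f$ is free if and only if $\Der(-\log f)$ is a \emph{free} $R$-module, necessarily of rank $n$. The passage between these two formulations goes through the exact sequence $0\to \Der(-\log f)\to \theta \to J(f)/(f)\to 0$, where the second map sends $D\mapsto D(f) \bmod (f)$; freeness of $J(f)$ being perfect of codimension $2$ translates, via $\Ext$-vanishing, into $\Der(-\log f)$ being reflexive and then free.

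Granting that reformulation, the forward direction is the substantive one: suppose $f$ is a free divisor, so $M := \Der(-\log f)$ is free of rank $n$. I would pick an $R$-basis $\delta_1,\dots,\delta_n$ of $M$, write each $\delta_j = \sum_i a_{ij}\partial_{x_i}$, and set $A = (a_{ij})$. By construction $\delta_j(f) = \sum_i a_{ij} f_i \in (f)$, which is exactly the statement $(\nabla f)A \equiv 0 \bmod (f)$. It remains to show $\det A = f$ up to a nonzero scalar (which can then be absorbed). For this I would use that the inclusion $M \hookrightarrow \theta$ is an isomorphism after inverting $f$ — away from the divisor every derivation is logarithmic — so $\det A$ is a unit times a power of $f$; comparing, say, at a smooth point of $\{f=0\}$ where $\Der(-\log f)$ is locally generated by one derivation transverse to the divisor together with $n-1$ tangent ones, one sees $f$ divides $\det A$ exactly once, giving $\det A = cf$ with $c\in K^\times$. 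The converse direction is formal: given such an $A$, its columns give $n$ elements of $M$, and $(\nabla f)A\equiv 0$ together with $\det A = f$ forces (via Cramer's rule, comparing with the relations $f_i \cdot \partial_{x_j} - f_j\cdot \partial_{x_i}$ and the Koszul-type syzygies) these columns to generate $M$ freely, whence $M$ is free and $f$ is a free divisor.

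The key step — and the main obstacle — is pinning down that $\det A$ equals $f$ up to a unit rather than merely $f^k$ times a unit or a proper divisor of $f$. One clean way around it: since $A$ has rank $n$ generically and $(\nabla f)A \equiv 0 \bmod (f)$, reduce modulo $(f)$ to see that the reduction $\bar A$ over $R/(f)$ has $\overline{\nabla f}$ in its left kernel; as $\overline{\nabla f}\neq 0$ on the reduced ring $R/(f)$ (because $f$ is reduced, so its singular locus has codimension $\geq 1$ there), $\bar A$ is not invertible, so $f \mid \det A$. For the opposite divisibility and the exact power, the localization argument at a generic smooth point of each component of $\{f=0\}$ is what I would spell out carefully — this is where reducedness of $f$ is genuinely used. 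I would then organize the whole argument so that the two implications are symmetric: \emph{freeness} $\Leftrightarrow$ \emph{existence of a ``complete'' system of logarithmic derivations whose coefficient determinant is $f$}, with the Buchsbaum--Eisenbud exactness criterion supplying the bookkeeping that turns ``$n$ independent logarithmic derivations with $\det = f$'' into ``$M$ is free.''

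(Alternatively, one may bypass the module $\Der(-\log f)$ entirely and argue directly with the Hilbert--Burch theorem: if $J(f)$ is perfect of codimension $2$ with no embedded components, a minimal free resolution $0\to R^{n}\xrightarrow{A'} R^{n+1}\to R \to R/J(f)\to 0$ exists, the maximal minors of $A'$ generate $J(f)$, and one extracts the Saito matrix $A$ from $A'$ by combining the Euler-type relation $\sum x_i f_i$-column appropriately; but the logarithmic-derivations route is more transparent and is the one I would write up.)
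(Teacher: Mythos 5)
The first thing to note is that the paper does not prove this statement at all: it is quoted as Saito's theorem with a reference to \cite{Sai}, and the only proof-adjacent material in the text is the remark that follows it, recording precisely the reformulation you start from (the exact sequence $0\to\Der(-\log f)\to\theta\to J(f)/(f)\to 0$ and the equivalence of freeness of $f$ with freeness of $\Der(-\log f)$, of rank $n$, the discriminant matrix being the matrix of the inclusion into $\theta$). So your write-up is measured against the classical argument rather than anything in the paper, and in outline it \emph{is} the classical argument: the homological translation between perfection of $J(f)$ and freeness of $\Der(-\log f)$, the forward direction by choosing a basis and pinning down the divisor of $\det A$, and the converse by Cramer's rule.

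Two steps need repair before this is a proof. First, your quick argument that $f\mid\det A$ is too fast: $R/(f)$ is reduced but in general not a domain, so from $\overline{\nabla f}\,\bar A=0$ and $\overline{\nabla f}\neq 0$ you only conclude that $\overline{\det A}$ annihilates a nonzero vector, i.e.\ $\det A$ lies in \emph{some} minimal prime of $(f)$, not in all of them. The correct move is the one you already invoke for the exact order of vanishing: localize at each minimal prime $\mathfrak p$ of $(f)$; there $R_{\mathfrak p}$ is a discrete valuation ring in which $f$ is a uniformizer (this is where reducedness enters), and generic smoothness in characteristic zero gives $\overline{\nabla f}\neq 0$ over the residue field, so the determinant vanishes modulo $\mathfrak p$ and every irreducible factor of $f$ divides $\det A$; the same localization, using $J(f)_{\mathfrak p}=R_{\mathfrak p}$, shows the valuation of $\det A$ along each component is exactly one, giving $\det A=cf$ with $c\in K^{\times}$, absorbed by rescaling a column. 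Second, in the converse the lemma that actually makes Cramer's rule work is missing: for \emph{any} $D\in\Der(-\log f)$ with coefficient vector $v$, the determinant of $A$ with one column replaced by $v$ lies in $(f)$. This follows from the very same per-component argument (all columns of the modified matrix are killed by $\overline{\nabla f}$ over the residue field of each $R_{\mathfrak p}$), and then Cramer's rule gives $v=Aw$ with $w\in R^{n}$, so the columns of $A$ generate $\Der(-\log f)$ and are independent since $\det A=f\neq 0$; the appeals to the Koszul relations $f_{i}\partial_{x_{j}}-f_{j}\partial_{x_{i}}$ and to the Buchsbaum--Eisenbud criterion are not what carries the weight and can be dropped. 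With these two points made explicit, your plan is the standard proof of Saito's criterion.
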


The matrix $A$ appearing in this criterion is called a {\em discriminant\/} (or {\em Saito\/}) 
{\em matrix\/} of $f$. If the entries of $A$ can be chosen to be linear polynomials, then $f$ is called a
{\em linear\/} free divisor. Note that $f$ is then necessarily a homogeneous polynomial of degree 
$n$.The {\em normal crossing divisor\/} $f = x_{1}\cdots x_{n}$ is a simple example of a linear free 
divisor.

\begin{remark}
It follows immediately from this criterion that a free divisor $f\in R$ remains a free divisor in any
polynomial or power series ring over $R$. When viewed as an element of such larger ring, $f$ is 
called the {\em suspension\/} of the original free divisor from $R$.
\end{remark}

A different way to state the criterion, and to link it with the definition we chose, denote
$\Der(-\log f)\subseteq \theta$ those vector fields $D$ such that $D(f)\in (f)$, equivalently,
$D(\log f)=D(f)/f$ is a well defined element of $R$. With this notation, one has a short exact sequence
of $R$--modules
\begin{align*}
\xymatrix{
0\ar[r]&\Der(-\log f)\ar[r]&\theta\ar[r]^-{df}&J(f)/(f)\ar[r]&0
}
\end{align*}
and a reduced $f$ is a free divisor if, and only if, $\Der(-\log f)$ is a free $R$--module, necessarily 
of rank $n$. A discriminant matrix is then simply the matrix of the inclusion 
$\Der(-\log f)\subseteq \theta$, when bases of these free modules are chosen.

Now we turn to our results.

 \section{Polynomials Annihilated by Many Euler Vector Fields}
 In this section we assume that 
\begin{enumerate}[\rm (a)]
\item $f\in R$ is a nonzero squarefree polynomial that belongs to the ideal of its 
 derivatives, $f\in (f_{1},\dots,f_{n})\subseteq R$.
 \item  The $K$-vector space of Euler vector fields annihilating $f$ has dimension at least $n-2$.  
 In other words, there exist $n-2$ linearly independent  Euler vector fields 
 $E_{j}= \sum_{i}a_{ij}x_{i}\partial_{x_{i}}$, for $j=1,\dots,n-2$, such that $E_{j}(f)=0$. 
 Denote by $A$ the  $n\times(n-2)$ scalar matrix $(a_{ij})$ and by $B$ the matrix $(a_{ij}x_{i})$ of the 
 same size. 
\end{enumerate}
 
Under these assumptions the Jacobian ideal of $f$ is equal to the ideal of its partial derivatives 
and has codimension at least two. To show that it defines a Cohen-Macaulay subscheme of 
codimension two, it suffices thus to find a {\em Hilbert--Burch matrix\/}, 
necessarily of size $n\times(n-1)$, for the partial derivatives. By assumption, we have a matrix 
equation in $R$ of the form
\[
(\nabla f)  B= (0,0,\dots,0)\,.
\]
We need one more syzygy! More precisely; see, for example, \cite[20.4]{Eis}; to get a Hilbert--Burch matrix for 
$(f_{1},\dots,f_{n})$, we want a column vector $w:=(w_{1},\dots,w_{n})^{T}$ with entries from $R$,
such that we have an equality of sequences of elements from $R$ of the form
\[
(f_{1},\dots,f_{n}) = I_{n-1}(C)\,,
\]
where $C$ is obtained from $B$ by appending the column vector $w$, and  $I_{n-1}$ denotes
the sequence of appropriately signed maximal minors of the indicated $n\times(n-1)$ matrix.

Define a $R$--linear map from $R^{n}$ to $R^{n}$ through 
\[
\epsilon(w_{1},\dots,w_{n}) := I_{n-1}( B \mid w)\,,
\]
where $(B\mid w)$ denotes the $n\times(n-1)$--matrix obtained from $B$ by adding the column $w$.
 
Clearly, $B \circ \epsilon=0$, and the sequence of free (graded) $R$--modules
\begin{align*}
\mathbf{BR}(B) \quad\equiv\quad \left(F_{2} = R^{n}(n-1)
\xto{\ \partial_{2}=\epsilon\ } F_{1}=R^{n}(-1)
\xto{\ \partial_{1}=B\ } F_{0}=R^{n-2}\to 0\right)
\end{align*}
is the beginning of the {\em Buchsbaum--Rim complex\/} for the matrix $B$; see, for example, 
\cite[Appendix A.2]{Eis}. 
By the given setup, the vector $\nabla f \in F_{1}$ is a cycle in this complex, and the required
vector $w$ exists if, and only if, the class of $\nabla f$  is zero in the first homology group 
$H_{1}(\mathbf{BR}(B))$ of this Buchsbaum--Rim complex. 

Now, if the ideal of the maximal minors of $B$ has the maximal possible codimension, 
equal to $n-(n-2)+1=3$, then the entire Buchsbaum--Rim complex is exact and so, in particular,
$H_{1}(\mathbf{BR}(B))=0$. 

The minor  of $B$  obtained by deleting rows $i$ and $j$ is the monomial 
$u_{ij}  x_1\dots x_n/x_{i}x_j$, where $u_{ij} $ is the minor of $A$ obtained by deleting the rows 
corresponding to $i$ and $j$. The ideal generated by these minors will have maximal codimension 
if, and only if,  all the maximal minors of $A$  are non-zero. 
 
Summing up, we have the following result.

\begin{proposition} Under the assumption {\em (a)\/} and {\em (b)}, and with the notation as above,
\label{BuRim}

 \begin{enumerate}[\rm (1)]
\item The polynomial $f$ is  a  free divisor  if, and only if, the class of $\nabla f$ in the first homology 
$H_{1}(\mathbf{BR}(B))$ of the Buchsbaum--Rim complex associated to $B$ vanishes.
\item If all the  maximal minors of $A$ are non-zero, then $f$ is a free divisor. 
\end{enumerate}
\end{proposition}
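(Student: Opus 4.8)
Most of the argument is already assembled in the discussion preceding the statement, so the plan is mainly to put the pieces together and to supply the one direction of (1) that is not yet spelled out there.

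For the ``if'' part of (1): vanishing of the class of the cycle $\nabla f\in F_{1}$ in $H_{1}(\mathbf{BR}(B))$ is, by construction of that complex, the same as the existence of $w\in R^{n}$ with $\epsilon(w)=\nabla f$, i.e.\ with the appropriately signed maximal minors of $C=(B\mid w)$ equal to $f_{1},\dots,f_{n}$, so that $I_{n-1}(C)=J(f)$. Since the ideal of maximal minors of an $n\times(n-1)$ matrix has codimension at most $n-(n-1)+1=2$ while $J(f)$ has codimension at least $2$ under the running hypotheses, $\operatorname{codim}J(f)=2$; the Hilbert--Burch theorem \cite[20.4]{Eis} then makes $0\to R^{n-1}\xto{C}R^{n}\to R\to R/J(f)\to 0$ a free resolution, so $R/J(f)$ is perfect of codimension $2$, and as $f$ is squarefree by hypothesis (a) it is a free divisor.

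For the ``only if'' part of (1), the plan is to run Hilbert--Burch backwards. If $f$ is a free divisor then $R/J(f)$ is perfect of codimension $2$ with $J(f)=(f_{1},\dots,f_{n})$, so $0\to\operatorname{Syz}(f_{1},\dots,f_{n})\to R^{n}\xto{(f_{i})}R\to R/J(f)\to 0$ is a resolution and $\operatorname{Syz}(f_{1},\dots,f_{n})$ is free of rank $n-1$; the signed maximal minors of any $n\times(n-1)$ matrix whose columns form a basis of it equal $f_{1},\dots,f_{n}$ up to one common unit of $K$. The $n-2$ columns of $B$ lie in $\operatorname{Syz}(f_{1},\dots,f_{n})$, are linearly independent over $\operatorname{Frac}(R)$ because $A$ has rank $n-2$ by hypothesis (b), and have linear entries. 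If they can be completed to a basis of $\operatorname{Syz}(f_{1},\dots,f_{n})$, the $(n-1)$-st basis vector, rescaled by a unit of $K$, is a $w$ with $\epsilon(w)=\nabla f$, and hence $[\nabla f]=0$ in $H_{1}(\mathbf{BR}(B))$. I expect the completion step to be the one needing real work: it amounts to the freeness of the rank-one module $\operatorname{Syz}(f_{1},\dots,f_{n})/\operatorname{im}B$. The natural tool is the linearity of the Euler syzygies, which forces their coordinate vectors with respect to a homogeneous basis of $\operatorname{Syz}(f_{1},\dots,f_{n})$ to sit in a single graded piece, so that the completion can be attempted as linear algebra over $K$; carrying this out in the generality of the Proposition is the main obstacle.

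Part (2) is then immediate from the ``if'' part. If every maximal minor $u_{ij}$ of $A$ is non-zero, then, by the minor computation recalled before the statement and because the $u_{ij}$ are units of $R$, the ideal $I_{n-2}(B)$ of maximal minors of $B$ is exactly the ideal generated by all squarefree monomials of degree $n-2$ in $x_{1},\dots,x_{n}$; its minimal primes are the ideals $(x_{i},x_{j},x_{k})$ with $i,j,k$ distinct, so $\operatorname{codim}I_{n-2}(B)=3$, the largest value possible for an $n\times(n-2)$ matrix. Hence the whole Buchsbaum--Rim complex $\mathbf{BR}(B)$ is exact, so $H_{1}(\mathbf{BR}(B))=0$, the class of $\nabla f$ vanishes, and part (1) shows that $f$ is a free divisor.
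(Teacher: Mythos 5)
The two halves you actually establish coincide with the paper's own argument: the ``if'' direction of (1) via the converse Hilbert--Burch theorem applied to $C=(B\mid w)$, and part (2) via the observation that the maximal minors of $B$ are unit multiples of the squarefree monomials of degree $n-2$, so that $I_{n-2}(B)$ has the maximal codimension $3$ and the Buchsbaum--Rim complex is exact. The Proposition in the paper merely ``sums up'' the preceding discussion, which contains exactly these two points and the tautology that $w$ exists if and only if the class of $\nabla f$ vanishes; for these parts your write-up and the paper's are the same.

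The genuine gap is the ``only if'' half of (1), which you explicitly leave open, and it is not a routine loose end: the completion step you flag (extending the columns of $B$ to a basis of $\operatorname{Syz}(f_{1},\dots,f_{n})$, equivalently the unimodularity of the coordinate matrix of the Euler syzygies) can fail under hypotheses (a) and (b) alone even when $f$ \emph{is} a free divisor. Take $n=3$ and $f=y^{2}-z^{3}\in K[x,y,z]$: then $f\in(f_{y},f_{z})$, the Euler vector fields annihilating $f$ are exactly the scalar multiples of $x\partial_{x}$, so (a) and (b) hold with $B=(x,0,0)^{T}$, and $f$ is a free divisor because $J(f)=(y,z^{2})$ is a codimension-two complete intersection. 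Yet the signed maximal minors of $(B\mid w)$ are, up to sign convention, $(0,-xw_{3},xw_{2})$, so every element of $\operatorname{im}\epsilon$ has its last two entries divisible by $x$, and the cycle $\nabla f=(0,2y,-3z^{2})$ has nonvanishing class in $H_{1}(\mathbf{BR}(B))$. Hence no argument --- in particular not the graded linear algebra over $K$ you propose --- can close this step in the stated generality; one needs an additional nondegeneracy hypothesis ruling out such degenerate (suspension-type) situations, which is precisely why case (3) of Theorem \ref{poly3} is handled in the paper by a separate suspension argument rather than by a Hilbert--Burch matrix of the form $(B\mid w)$. The paper itself offers no proof of this direction beyond the tautological equivalence, so your instinct that this is ``the main obstacle'' is correct, but as stated it is an obstacle that cannot be overcome, only hypothesized away.
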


\begin{example}
Consider
\begin{align*}
f = u x^{a} - vx^{b}
\end{align*}
with $u,v\in K$ nonzero and $a,b\in \NN^n$ different exponents with $\min(a_{i},b_{i}) \leqslant 1$, 
for each $i$, to ensure that $f$ is reduced. The Euler vector field 
$\sum_{i=1}^n c_i x_{i}\partial/\partial x_{i}$ then annihilates  $f$ if, and only if, 
$\sum a_ic_i=0$ and $\sum b_ic_i=0$.  Assuming $a_{i}b_{j}-a_{j}b_{i}\neq 0$ for some pair of indices
$i<j$, the space of Euler vector fields annihilating $f$  has dimension $n-2$. The corresponding 
$n\times (n-2)$ coefficient matrix $A$ then satisfies $\binom{a}{b}A=0$, where $\binom{a}{b}$ is the 
obvious $2\times n$ matrix of scalars. Linear algebra tells us that the maximal minors of $A$ are then, 
up to sign and a common non-zero constant, equal to the maximal minors of 
$\binom{a}{b}$.  By virtue of Proposition \ref{BuRim}(2)  we can conclude that if $a_{i}b_{j}-a_{j}b_{i}\neq 0$ for all 
pairs $i<j$, then the binomial $f$ is a free divisor. 
\end{example}

In Section \ref{binocla} below we will give a complete characterization of homogeneous binomial free 
divisors. 

In three variables the  considerations above lead to a complete characterization of free divisors that are 
weighted homogeneous and annihilated by an Euler  vector field. To write down the corresponding 
Hilbert--Burch matrices in a compact form,  the following tool will be useful.

\begin{definition}
Let $d > 0$ be a natural number, $R=K[x_{1},\dots,x_{n}]$ a polynomial ring over a field $K$ of 
characteristic zero, and $ y = \{y_{1},\dots,y_{m}\}$ a subset of the variables $x$. 
Define a $K$--linear endomorphism $(\deg+d)_{ y}^{-1}$ on $R$ through the following action on 
monomials:
\begin{align*}
(\deg+d)_{ y}^{-1}( x^{ e}) := \frac{1}{|e|_{ y}+d} x^{ e}\,,
\end{align*}
where $|e|_{ y} := \sum_{i, x_{i}\in  y}e_{i}$ denotes the usual total degree of $ x^{ e}$
with respect to the variables $ y$.

In words, $(\deg+d)_y^{-1}$ has the polynomials that are homogeneous of total degree $a$ in the 
variables $ y$ as eigenvectors of eigenvalue $1/(a+d)$. If  $y$ is the set of all variables then the  
corresponding $K$--linear endomorphism will be simply denoted by $(\deg+d)^{-1}$. \end{definition}

As is well known, the endomorphism just defined can be used to split in characteristic zero
the tautological Koszul complex on the variables. Here we will use the following form.

\begin{lemma}
Let $V=\oplus_{i}Kx_{i}$ be the indicated vector space over $K$ and 
$V\cong \oplus_{i}K\xi_{i}, x_{i}\mapsto \xi_{i}$ an isomorphic copy of it.
Let $\KK^{\bdot} = \SSS_{K}V\otimes_{K}\Lambda_{K}V\cong R\otimes_{K} \Lambda^{\bdot}_{K}
(\xi_{1},\dots,\xi_{n})$ be the exterior algebra over $R$ on variables $\xi_{i}$, the graded 
$R$--module underlying the usual Koszul complex.

The  $R$--linear derivation $\partial := \sum_{i}a_{i}x_{i}\frac{\partial}{\partial \xi_{i}}$ defines
a differential on $\KK$ for any choice of $a_{i}\in K$. Let $W\subseteq V$ denote
the subspace generated by those variables $ y$ among the $ x$, for which $a_{i}\neq 0$,
and denote by $\eta_{j}$ the corresponding variables among the $\xi_{i}$ in the isomorphic copy of 
$W$. 

If $\omega \in \KK^{m}$ is a {\em cycle\/} for $\partial$, then the class 
of $\omega$ in $H_{i}(\KK^{\bdot},\partial)$ is zero if, and only if, $\omega = 0$ in
$R/(y)\otimes \Lambda^{i}(V/W)$. 
In that case, $\omega':=(\sum_{j}\frac{1}{a_{j}}d\eta_{j}\partial_{y_{j}}) \circ (\deg+d)_{y}^{-1}(\omega)$
provides a boundary, $\partial(\omega') =\omega$. \qed
\end{lemma}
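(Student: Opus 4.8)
The plan is to recognize $(\KK^{\bdot},\partial)$ as the Koszul complex on the sequence $(a_{1}x_{1},\dots,a_{n}x_{n})$ in $R$ and to exploit that the nonzero members of that sequence form a regular sequence. Reorder the variables so that $y_{1}=x_{1},\dots,y_{m}=x_{m}$ are exactly those with $a_{i}\neq 0$; then $\partial$ involves only the exterior variables $\eta_{1},\dots,\eta_{m}$, and $\KK^{\bdot}$ splits as a tensor product over $K$ of the Koszul complex of $(a_{1}y_{1},\dots,a_{m}y_{m})$ over $R$ with the exterior algebra $\Lambda_{K}(\xi_{m+1},\dots,\xi_{n})$, the latter carrying the zero differential. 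Since the $a_{j}$ are units and $y_{1},\dots,y_{m}$ is a regular sequence in $R$, the first factor resolves $R/(y)$ and is exact in positive homological degrees; Künneth then yields $H_{i}(\KK^{\bdot},\partial)\cong R/(y)\otimes_{K}\Lambda^{i}(V/W)$, where the class of a cycle is carried to its image under reduction modulo $(y)$ followed by the projection $\Lambda^{i}V\twoheadrightarrow\Lambda^{i}(V/W)$ that annihilates every wedge monomial containing some $\eta_{j}$. This already gives the asserted equivalence: writing $\omega=\sum_{J}c_{J}\xi_{J}$, the class of a cycle $\omega$ vanishes exactly when $c_{J}\in(y)$ for every index set $J$ disjoint from $W$, i.e.\ when $\omega=0$ in $R/(y)\otimes\Lambda^{i}(V/W)$.

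For the explicit primitive I would introduce the odd operator $\tau:=\sum_{j\,:\,a_{j}\neq 0}\tfrac1{a_{j}}\,\eta_{j}\wedge\partial_{y_{j}}$ and compute the graded commutator $\partial\tau+\tau\partial$. A Leibniz computation on a monomial $x^{e}\xi_{J}$ should produce two ``diagonal'' contributions, $\sum_{j\in W\setminus J}e_{j}$ and $\sum_{j\in J\cap W}(e_{j}+1)$, together with two families of cross terms that cancel each other; the upshot is
\[
(\partial\tau+\tau\partial)(x^{e}\xi_{J})=\bigl(|e|_{y}+|J\cap W|\bigr)\,x^{e}\xi_{J},
\]
multiplication by the $y$-degree of $x^{e}$ plus the number of $\eta$-variables occurring in $\xi_{J}$. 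One checks that $\partial$ preserves this weight, hence commutes with the diagonal operator inverting it. Splitting a general cycle into its components of fixed exterior $W$-degree $d$ (each again a cycle, since $\partial$ lowers the $W$-degree by exactly one), one may assume $\omega$ is $W$-homogeneous of degree $d$; then the weight equals $|e|_{y}+d$ on every monomial of $\omega$, the inverting operator is precisely $(\deg+d)_{y}^{-1}$, and the vanishing hypothesis is exactly what forces $|e|_{y}+d\geq 1$ on each monomial (automatic for $d\geq 1$; for $d=0$ because then each $c_{J}\in(y)$), so $(\deg+d)_{y}^{-1}$ is defined on $\omega$. With $\omega'=\tau\circ(\deg+d)_{y}^{-1}(\omega)$ one then computes
\[
\partial(\omega')=(|e|_{y}+d)(\deg+d)_{y}^{-1}(\omega)-\tau\partial\bigl((\deg+d)_{y}^{-1}(\omega)\bigr)=\omega-\tau\circ(\deg+d)_{y}^{-1}(\partial\omega)=\omega,
\]
using $\partial\omega=0$ and the commutation of $\partial$ with $(\deg+d)_{y}^{-1}$.

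I expect the one genuinely delicate point to be the verification of the identity $\partial\tau+\tau\partial=|e|_{y}+|J\cap W|$ with the correct signs: one has to track the transpositions incurred when moving $\eta_{j}$ past the remaining factors of $\xi_{J}$ and confirm that the cross terms coming from $\partial\tau(\omega)$ and from $\tau\partial(\omega)$ match up with opposite signs. Everything else --- the Koszul factorization, the Künneth identification of $H_{i}$, and the reduction to $W$-homogeneous cycles --- is routine.
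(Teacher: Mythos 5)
The paper does not actually prove this lemma: it is stated as ``well known'' (the characteristic-zero splitting of the tautological Koszul complex) with no argument given, so there is no official proof to compare against; your write-up supplies exactly the standard argument the authors are invoking, and it is correct. The homology identification via factoring $(\KK^{\bdot},\partial)$ as the Koszul complex on the regular sequence $(a_{1}y_{1},\dots,a_{m}y_{m})$ tensored over $R$ with the inert exterior algebra on the remaining $\xi$'s is right, and the delicate point you flagged does work out: with $\tau=\sum_{j:a_{j}\neq0}\tfrac{1}{a_{j}}\,\eta_{j}\wedge\partial_{y_{j}}$, the relations $\iota_{\xi_{i}}(\eta_{j}\wedge\alpha)+\eta_{j}\wedge\iota_{\xi_{i}}(\alpha)=\delta_{ij}\alpha$ and $\partial_{y_{j}}(x_{i}c)=x_{i}\partial_{y_{j}}(c)+\delta_{ij}c$ give
\[
\partial\tau+\tau\partial=\sum_{j:a_{j}\neq0}\bigl(y_{j}\partial_{y_{j}}+\eta_{j}\wedge\iota_{\eta_{j}}\bigr),
\]
i.e.\ multiplication by $|e|_{y}+|J\cap W|$ on $x^{e}\xi_{J}$, so the cross terms cancel as you anticipated. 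Your two refinements are exactly the right reading of the (somewhat loosely stated) lemma: the $d$ in $(\deg+d)_{y}^{-1}$ is the exterior $W$-degree of a $W$-homogeneous component of $\omega$ (each such component is again a cycle, since $\partial$ lowers that degree by exactly one), and only for the $d=0$ component is the hypothesis $\omega=0$ in $R/(y)\otimes\Lambda^{i}(V/W)$ needed to make the weight $|e|_{y}+d$ strictly positive and hence invertible; for $d\geqslant1$ the component is automatically a boundary, consistent with the homology computation. The one cosmetic slip is writing $\tau\circ(\deg+d)_{y}^{-1}(\partial\omega)$ for $\tau\partial\bigl((\deg+d)_{y}^{-1}\omega\bigr)$ --- strictly, $\partial$ intertwines $(\deg+d)_{y}^{-1}$ on $W$-degree $d$ with $(\deg+(d-1))_{y}^{-1}$ on $W$-degree $d-1$, since it preserves the weight but shifts its two constituents --- yet both expressions vanish because $\partial\omega=0$ and $\partial$ preserves the weight, so nothing in the argument is affected.
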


\begin{theorem}
\label{poly3}
Let $K$ be a field of characteristic zero and $f\in K[x,y,z]$ a reduced 
polynomial in three variables such that $f$ is contained in the ideal of 
its partial derivatives, $f\in (f_{x},f_{y},f_{z})$.

Assume further that there is a triple $(a,b,c)$ of elements of $K$ that are not all  zero such that the Euler vector field 
$E=ax\frac{\partial }{\partial x}+by\frac{\partial}{\partial y}+cz\frac{\partial}{\partial z}$ satisfies
$E(f)=0$. 

We then have the following possibilities, up to renaming the variables:
\begin{enumerate}[\rm(1)]
\item If $abc\neq 0$, then $f$ is a free divisor with Hilbert--Burch matrix
\begin{align*}
(f_{x},f_{y},f_{z}) = 
I_{2}\left(\begin{array}{ccc}
\vphantom{\dfrac{1}{2}}ax & & \big(\tfrac{1}{c}- \tfrac{1}{b}\big)(\deg+2)^{-1}(f_{yz})\\
\vphantom{\dfrac{1}{2}}by & &\big(\tfrac{1}{a}- \tfrac{1}{c}\big)(\deg+2)^{-1}(f_{xz}) \\
\vphantom{\dfrac{1}{2}}cz & & \big(\tfrac{1}{b}- \tfrac{1}{a}\big)(\deg+2)^{-1}(f_{xy})
\end{array}\right)
\end{align*}
where $f_{**}$ denotes the corresponding second order derivative of $f$.
\item If $a=0$, but $bc\neq 0$, then $f$ is a free divisor if, and only if, $f_{x}\in (y,z)$.
If that condition is verified and $f_{x}= yg+zh$, then
$f_{y}/cz = -f_{z}/by$ is an element of $R$ and a Hilbert--Burch matrix is given by
\begin{align*}
(f_{x},f_{y},f_{z}) = 
I_{2 }\left(\begin{array}{cc}
0 & f_{y}/cz = -f_{z}/by\\
by & -h/c \\
cz & g/b
\end{array}\right)
\end{align*}
\item If $a=b=0$, then $f$ is independent of $z$ and so, as the suspension of a reduced plane curve, is
a free divisor.
\end{enumerate}
\end{theorem}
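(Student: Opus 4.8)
The plan is to feed the data into the machinery of Section~3. Since $f\in(f_x,f_y,f_z)$ we have $J(f)=(f_x,f_y,f_z)$, and because $f$ is reduced this ideal has grade (equal to codimension) at least $2$; so by the Hilbert--Burch theorem $f$ is a free divisor precisely when $J(f)$ is the ideal of signed maximal minors of a $3\times 2$ matrix $C$. The single Euler vector field $E$ with $E(f)=0$ is exactly one scalar syzygy of $(f_x,f_y,f_z)$, namely the column $B=(ax,by,cz)^{T}$, and the question is whether $B$ can be completed to such a $C$. This is the content of Proposition~\ref{BuRim} and the Lemma, applied with $n=3$, scalar coefficient column $A=(a,b,c)^{T}$, and $B$ as above. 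First I would peel off case~(3): if $a=b=0$ then $c\neq 0$, so $E(f)=cz\,f_z=0$ forces $f_z=0$, hence $f\in K[x,y]$; being reduced, $f$ is a free divisor (any reduced plane curve is, as noted in the Example), and as an element of $K[x,y,z]$ it is the suspension of that curve. In the remaining cases at least two of $a,b,c$ are nonzero, and after renaming the variables we are in case~(1), $abc\neq 0$, or case~(2), $a=0$ and $bc\neq 0$.

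In case~(1) the matrix $A=(a,b,c)^{T}$ has all three of its maximal minors (its entries) nonzero, so Proposition~\ref{BuRim}(2) gives at once that $f$ is a free divisor. To obtain the displayed Hilbert--Burch matrix I would invoke the Lemma. Its twisted Koszul differential here is $\partial=ax\,\partial_{\xi_1}+by\,\partial_{\xi_2}+cz\,\partial_{\xi_3}$ on $\KK^{\bdot}=R\otimes_{K}\Lambda^{\bdot}_{K}(\xi_1,\xi_2,\xi_3)$, with $W=V$ since no coefficient vanishes, and $\omega:=f_x\xi_1+f_y\xi_2+f_z\xi_3\in\KK^1$ is a $\partial$--cycle because $\partial\omega=ax\,f_x+by\,f_y+cz\,f_z=E(f)=0$. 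As $\Lambda^1(V/W)=0$, the Lemma produces the explicit primitive $\omega'=\bigl(\tfrac1a\xi_1\partial_{x}+\tfrac1b\xi_2\partial_{y}+\tfrac1c\xi_3\partial_{z}\bigr)\bigl((\deg+1)^{-1}(\omega)\bigr)$ with $\partial\omega'=\omega$. Expanding, and using the commutation rule $\partial_{x_i}\circ(\deg+d)^{-1}=(\deg+d+1)^{-1}\circ\partial_{x_i}$ together with $f_{x_ix_j}=f_{x_jx_i}$, the coefficients of $\xi_1\xi_2,\xi_1\xi_3,\xi_2\xi_3$ in $\omega'$ become $(\tfrac1a-\tfrac1b)(\deg+2)^{-1}(f_{xy})$, $(\tfrac1a-\tfrac1c)(\deg+2)^{-1}(f_{xz})$, $(\tfrac1b-\tfrac1c)(\deg+2)^{-1}(f_{yz})$. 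Transporting $\omega'$ back through the identification of $\KK^2\xrightarrow{\partial}\KK^1$ with the Buchsbaum--Rim map $F_2\xrightarrow{\epsilon}F_1$, $\epsilon(w)=I_2(B\mid w)$ (a sign-bookkeeping step), yields exactly the matrix in part~(1), whose $2\times 2$ minors are then $f_x,f_y,f_z$ by construction.

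In case~(2) the column $A=(0,b,c)^{T}$ has a zero minor, so Proposition~\ref{BuRim}(2) is unavailable and I would use part~(1) of that Proposition: $f$ is a free divisor if and only if the class of $\nabla f$ vanishes in $H_1(\mathbf{BR}(B))$, with $B=(0,by,cz)^{T}$. For the twisted differential $\partial=by\,\partial_{\xi_2}+cz\,\partial_{\xi_3}$ one has $W=\langle y,z\rangle$, $V/W\cong K\bar\xi_1$, and the relevant set of variables is $\{y,z\}$; the Lemma then says the class of the cycle $\omega=f_x\xi_1+f_y\xi_2+f_z\xi_3$ is zero iff its image in $R/(y,z)\otimes\Lambda^1(V/W)\cong K[x]\,\bar\xi_1$ is zero, that is, iff $f_x\bmod (y,z)=0$, i.e.\ iff $f_x\in(y,z)$. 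When this holds, $by\,f_y+cz\,f_z=0$ and the primality of $y$ and $z$ force $z\mid f_y$ and $y\mid f_z$; cancelling shows $f_y/(cz)=-f_z/(by)$ is an element of $R$, and writing $f_x=yg+zh$ a short determinant expansion shows the signed $2\times2$ minors of the displayed matrix are $f_x,f_y,f_z$, so it is a Hilbert--Burch matrix and $f$ is a free divisor, as claimed. (Alternatively, this last matrix can be extracted from the Lemma's primitive for $\omega$.)

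I expect the genuine work to be purely organizational: matching the abstract primitive $\omega'$ delivered by the Lemma with the concretely displayed Hilbert--Burch matrix. This means being careful about the identification of the top two terms of $\mathbf{BR}(B)$ with the truncated twisted Koszul complex (including the signs in $\epsilon(e_k)=\pm\,\partial(\xi_i\xi_j)$) and about the correct index --- $d=1$, equivalently $(\deg+2)^{-1}$ --- in the Lemma's formula when the cycle lies in homological degree $1$. All the conceptual content --- that $B$ can or cannot be completed, and exactly when --- is already supplied by Proposition~\ref{BuRim} and the Lemma; what remains is this bookkeeping plus two routine $2\times2$ determinant checks.
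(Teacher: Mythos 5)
Your proposal is correct, and it carries out the alternative route that the paper names but does not pursue: the paper's proof says one may ``either use now the preceding lemma, or calculate directly,'' and then opts for the direct calculation, checking on monomials annihilated by $E$ that a single $2\times 2$ minor of the displayed matrix in case (1) reproduces $f_{x}$, with everything else (the other minors, case (2) including its ``only if'', and case (3)) left to the reader. You instead stay inside the Buchsbaum--Rim/Koszul machinery of Section 3: Proposition \ref{BuRim}(2) gives freeness in case (1); the homotopy of the Lemma produces the second column, and your coefficients $\big(\tfrac{1}{a}-\tfrac{1}{b}\big)(\deg+2)^{-1}(f_{xy})$, $\big(\tfrac{1}{a}-\tfrac{1}{c}\big)(\deg+2)^{-1}(f_{xz})$, $\big(\tfrac{1}{b}-\tfrac{1}{c}\big)(\deg+2)^{-1}(f_{yz})$ are exactly right --- transporting them through the identification of $\KK^{2}$ with $F_{2}$ yields the displayed column up to an overall sign, which is purely a matter of the orientation chosen in $\epsilon(w)=I_{2}(B\mid w)$ and harmless; and in case (2) you obtain the equivalence from Proposition \ref{BuRim}(1) combined with the Lemma's vanishing criterion (the image of $\nabla f$ in $R/(y,z)\otimes\Lambda^{1}(V/W)$ is $\bar f_{x}\,\bar\xi_{1}$, so the class dies precisely when $f_{x}\in(y,z)$), after which the divisibility statement and the $2\times 2$ minor check for the displayed matrix are exactly as you describe. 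What your route buys is a derivation of where the $(\deg+2)^{-1}$ entries come from and an actual argument for the parts the paper's written proof leaves implicit, notably the necessity of $f_{x}\in(y,z)$ in case (2); what the paper's direct monomial computation buys is brevity. The only care points are the ones you flag yourself: the unspecified $d$ in the Lemma must be read as the exterior degree of the cycle (here $d=1$, becoming $(\deg+2)^{-1}$ after commuting past one partial derivative), and the sign conventions relating the Koszul differential on $\KK^{2}$ to the Buchsbaum--Rim map $\epsilon$.
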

\begin{proof}
We simply need to verify that the Hilbert--Burch matrix is correct. 
One may either use now the preceding lemma, or calculate directly, as we will do.
We just verify that, in case (1), the minor obtained when deleting the first row is correct,
leaving the remaining calculations to the interested reader. It suffices to check the case when
$f=x^{e_{1}}y^{e_{2}}z^{e_{3}}$ is a monomial with $ae_{1}+be_{2}+ce_{3} = 0$ and 
$e_{i}\geqslant 0, |e| > 0$. Then,
\begin{align*}
&by(1/b- 1/a)(\deg+2)^{-1}(f_{xy})-cz (1/a- 1/c)(\deg+2)^{-1}(f_{xz}) \\
=\ & by(1/b- 1/a)(\deg+2)^{-1}(e_{1}e_{2}x^{e_{1}-1}y^{e_{2}-1}z^{e_{3}})\\
&\ -
cz (1/a- 1/c)(\deg+2)^{-1}(e_{1}e_{3}x^{e_{1}-1}y^{e_{2}}z^{e_{3}-1})\\
=\ &\frac{ e_{1}e_{2}}{|e|} (1-b/a)x^{e_{1}-1}y^{e_{2}}z^{e_{3}} -
\frac{ e_{1}e_{3}}{|e|} (c/a- 1)x^{e_{1}-1}y^{e_{2}}z^{e_{3}}\\
=\ &f_{x}\left(e_{2}(a-b)-e_{3}(c-a)\right)/a|e|\\
=\ &f_{x}\left((e_{2}+e_{3})a -e_{2}b-e_{3}c)\right)/a|e|\\
=\ &f_{x}
\end{align*}
as required.
\end{proof}

To apply this result, we need to detect Euler  vector fields annihilating given polynomials, and 
the following remark is useful for this purpose. 
\begin{remark} 
Assume $f$ is a polynomial that is homogeneous with respect to  two weights  $w,v\in \ZZ^n$. 
For every $a,b\in \ZZ$, the polynomial $f$ is then homogeneous with respect to $aw+bv$, 
of degree $a\deg_w(f)+b\deg_v(f)$. Taking  $a=\deg_v(f)$ and $b=-\deg_w(f)$, we conclude 
that $f$ is homogeneous of degree $0$ with respect to  $\deg_v(f)w-\deg_w(f)v$, and so the
corresponding Euler vector field annihilates $f$. If further some degree $a\deg_w(f)+b\deg_v(f)$
is not zero, then $f$ satisfies the assumption $(a)$ from the beginning.
\end{remark} 

This remark can be applied as follows.

\begin{example}  Set 
\[
f(x,y,z)=x^{\gamma_1}y^{\gamma_2}z^{\gamma_3} \Pi_{i=1}^k( x^a-\alpha_i y^bz^c)
\]
with   $a,b,c,k\in \NN\setminus\{0\}$,  $\gamma_j\in \{0,1\}$ and $\alpha_i\in K$.
Assume that the $\alpha_i$ are non-zero and distinct  so that $f$ is reduced. 
Then $f$ is a free divisor if, and only if, not both $\gamma_2$ and $\gamma_3$ equal $0$,
equivalently, $\gamma_{2}+\gamma_{3}>0$. 
To prove the statement, take $v=(0,c,-b)$ and $w=(b,a,0)$, so that $f$ becomes homogeneous  
with  respect to both $v$ and $w$, satisfying 
\[
\deg_v(f)=c\gamma_2-b\gamma_3\quad \text{and}\quad 
\deg_w(f)=b\gamma_1+a\gamma_2+kab\neq 0\,.
\]  
Hence, by the remark above, $f\in (f_{x}, f_{y}, f_{z})$, and the Euler vector 
field associated to 
\begin{align*}
\deg_v(f)w-\deg_w(f)v &= (c\gamma_2-b\gamma_3)(b,a,0) - (b\gamma_1+a\gamma_2+kab)(0,c,-b)\\
&=-b(-c\gamma_2+b\gamma_3,a\gamma_3+c\gamma_1+kac, -b\gamma_1-a\gamma_2-kab)
\end{align*}
annihilates $f$. Clearly, 
%\begin{align*}
%a\gamma_3+c\gamma_1+ kac &\geqslant kac \geqslant 1
%\end{align*}
the second and the third coordinates of this vector  are non-zero, while the first one equals
$b(c\gamma_2-b\gamma_3)$. Now, if  $\gamma_2$ or  $\gamma_3$ is non-zero, then $f_x\in (y,z)$ 
and we conclude by Theorem \ref{poly3}, either part  (1)  or (2), that $f$ is a free divisor. 

On the other hand, if  $ \gamma_2=\gamma_3=0$ then $f$ contains a pure power of $x$ and so 
$f_x\not\in (y,z)$. We may then conclude by Theorem \ref{poly3}(2)  that $f$ is not a free divisor.
\end{example} 

\begin{remark}
Some isolated members of this family of examples have been identified as free divisors before:
\begin{align*}
f = y(x^{2}-yz)\quad\text{or}\quad f = xy(x^{2}-yz)\,,
\end{align*}
the quadratic cone with, respectively, one or two planes, of which one is tangent, or
\begin{align*}
f = y(x^{2}-y^{2}z)\,,
\end{align*}
the Whitney umbrella with an adjoint plane; see \cite{MS}.

A remarkable feature of this example is that it exhibits free surfaces with arbitrarily many
irreducible components that are not suspended, in that we can, for example, extend the family of 
examples involving quadratic cones to
\[
f=x^{\gamma_{1}}y^{\gamma_{2}}z^{\gamma_{3}}\prod_{i=1}^{k}(x^{2}-\alpha_{i}yz)
\]
for $k\geqslant 1, \gamma_{j}\in \{0,1\}$ with $\gamma_{2}+\gamma_{3}\neq 0$ and
scalars $\alpha_{i}\in K$ satisfying $\prod_{i=1}^{k}\alpha_{i}\prod_{i<j}(\alpha_{i}-\alpha_{j})\neq 0$.
Such $f$ will clearly have $\gamma_{1}+\gamma_{2}+\gamma_{3}+k$ many irreducible components,
$1\leqslant \gamma_{1}+\gamma_{2}+\gamma_{3}\leqslant 3$ among them planes.
%------BILD--------   1
\begin{figure}[!h]
\begin{tabular}{c@{\hspace{1.5cm}}c}
\includegraphics[width=0.39 \textwidth]{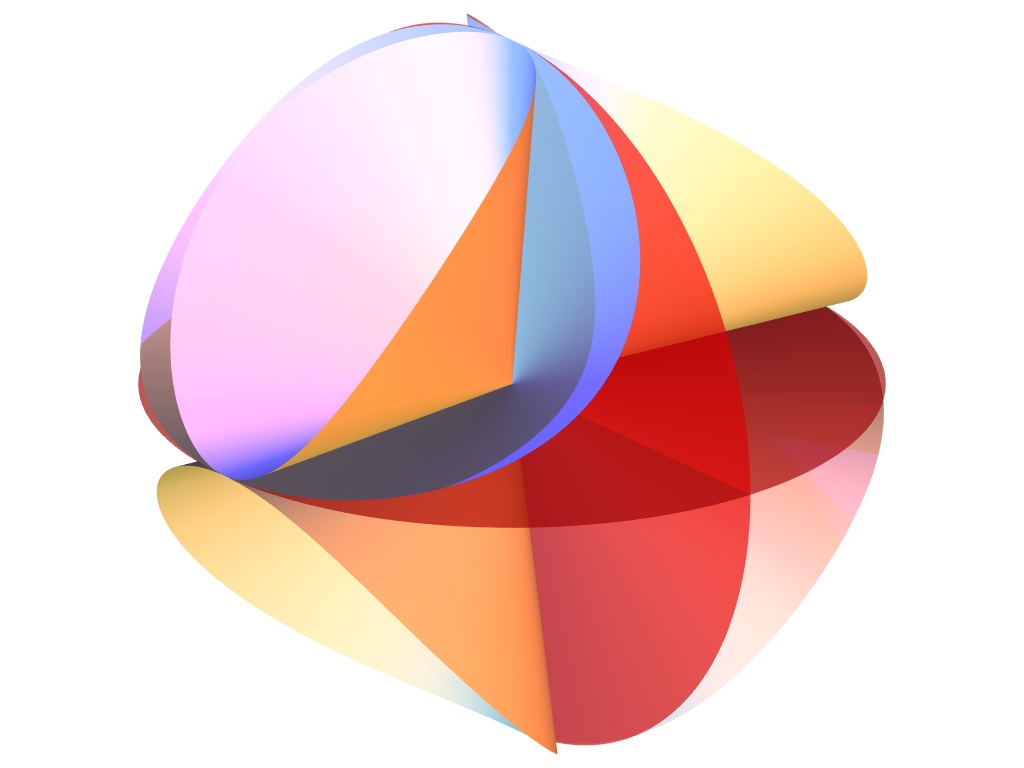} & 
\includegraphics[width=0.39 \textwidth]{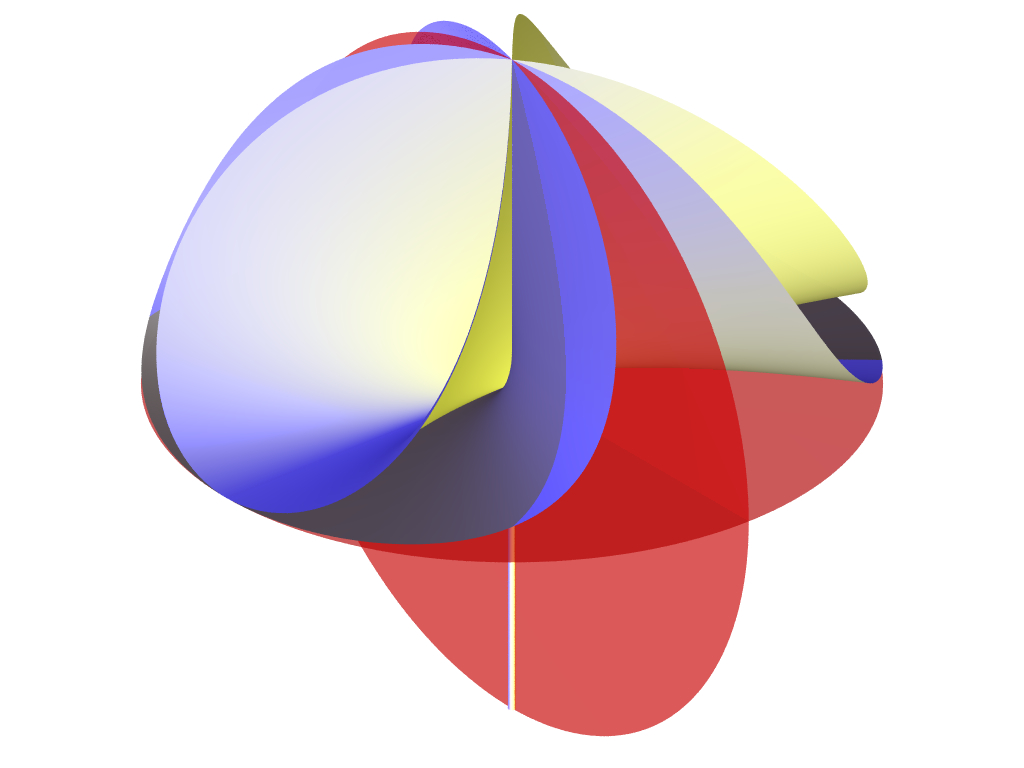}
\end{tabular}
\caption{ \label{fig:binomial} The free divisors defined by $h=yz(x^2-5yz)(x^2-\frac{1}{2}yz)(x^2+yz)$ (left) 
and $h=yz(x^2-\frac{1}{2}y^2z)(x^2+5y^2z)$ (right).}
\end{figure}
%---------------------
\end{remark}

\section{A Chain Rule for Quasihomogeneous Free Divisors}
We start with a simple observation:  if $f\in K[x]=K[x_1,\dots,x_n]$ and $g\in K[y]=K[y_1,\dots,y_m]$ 
are free divisors then $fg\in K[x,y]$ is a free divisor. To see this, one just takes the 
discriminant matrices $A,B$ associated to $f$ and $g$, and notes that the block matrix 
\[
\left(\begin{array}{cc}
A  & 0  \\
0 & B
\end{array}\right) 
\]
is a discriminant matrix  for $fg$ that one can think of as the pullback of the planar normal crossing 
divisor along the map with components $(f,g)$. 
Such free divisors have been called ``product-unions'' by J.~Damon 
\cite{Dam} or ``splayed'' divisors by Aluffi and E.~Faber \cite{AFa}.

If $f=f_{1}\cdots f_{k}$ is square free, then a vector field $D$ is logarithmic for $f$ if, and only if,
$D$ is logarithmic for each $f_{i}$, as
\begin{align*}
D(\log f) =\sum_{i}D(\log f_{i}) =\sum_{i}\frac{D(f_{i})}{f_{i}}
\end{align*}
can only be an element of $R$ if that holds for the summands.

We now use these observations to establish a {\em chain rule\/} for free divisors.
In this form, the result and its proof are due to Mond and Schulze \cite[Thm.4.1]{MS}, while 
we originally had obtained a weaker result. We include an algebraic version of the 
proof, and strengthen their result by removing the hypothesis that no $f_{i}$ be a smooth divisor.

\begin{theorem} 
\label{chainrule}
Let $k\geqslant 1$ be an integer, $K$ a field of characteristic zero.
Assume given a free divisor $f=f_{1}\cdots f_{k}\in R = K[x_{1},\dots,x_{n}]$ that admits
vector fields $E_{j}$, for $j=1,\dots,k$, satisfying $E_{j}(f_{i})=\delta_{ij}f_{i}$, where $\delta_{ij}$ 
is the Kronecker delta.

If $H=y_{1}\cdots y_{k}H_{1}\in Q:=K[y_{1},\dots,y_{k}]$ is a free divisor 
such that $f$ and $H_{1}(f_{1},..., f_{k})$ are without common factor, then the polynomial 
$\tilde H:= H(f_{1},\dots,f_{k})\in R$ is a free divisor.
\end{theorem}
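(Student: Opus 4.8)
The plan is to verify Saito's criterion (Theorem~\ref{saito}) for $\tilde H$. Regard $(f_{1},\dots,f_{k})$ as a $K$--algebra map $\varphi^{*}\colon Q\to R$, $y_{j}\mapsto f_{j}$, so that $\tilde H=\varphi^{*}H$. The first step is to extract the geometric content of the hypothesis on the $E_{j}$: the relations $E_{j}(f_{i})=\delta_{ij}f_{i}$ say exactly that the Jacobian matrix of $(f_{1},\dots,f_{k})$, multiplied on the right by the coefficient matrix of $E_{1},\dots,E_{k}$, equals $\diag(f_{1},\dots,f_{k})$. Hence that Jacobian has rank $k$ over the open set $\{f\neq0\}$, where $f=f_{1}\cdots f_{k}$; in other words the morphism $\varphi\colon\Spec R\to\AA^{k}_{K}$ is smooth on $\{f\neq0\}$. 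In particular $\varphi$ is dominant, so $\varphi^{*}$ is injective and $\tilde H\neq0$, and the preimage under $\varphi$ of a reduced closed subscheme of $\AA^{k}_{K}$ is reduced at every point of $\{f\neq0\}$.

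This already yields that $\tilde H$ is reduced, which is what makes Theorem~\ref{saito} applicable. Write $\tilde H=f\cdot G$ with $G:=H_{1}(f_{1},\dots,f_{k})=\varphi^{*}H_{1}$. Since $H=y_{1}\cdots y_{k}H_{1}$ is squarefree, so is $H_{1}$, hence $V(H_{1})$ is reduced and, by the preceding paragraph, $V(G)=\varphi^{-1}(V(H_{1}))$ is reduced at every point of $\{f\neq0\}$. By hypothesis $G$ and $f$ share no factor, so the generic point of each irreducible component of $V(G)$ lies in $\{f\neq0\}$; thus $V(G)$ is generically reduced, and since $R/(G)$ is Cohen--Macaulay it satisfies $S_{1}$, so $R/(G)$ is reduced, i.e. $G$ is squarefree. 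As $G$ and $f=f_{1}\cdots f_{k}$ are squarefree and coprime, $\tilde H=fG$ is squarefree.

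It remains to produce a Saito matrix for $\tilde H$. The point is that logarithmic derivations pull back: if $\xi=\sum_{j}g_{j}\,y_{j}\partial_{y_{j}}\in\theta_{Q}$, then $\big(\sum_{j}\varphi^{*}(g_{j})E_{j}\big)(\varphi^{*}q)=\varphi^{*}\big(\xi(q)\big)$ for all $q\in Q$ — this uses only $E_{j}(f_{i})=\delta_{ij}f_{i}$ — so $\sum_{j}\varphi^{*}(g_{j})E_{j}$ lies in $\Der(-\log\tilde H)$ whenever $\xi\in\Der(-\log H)$. Moreover, since $H=y_{1}\cdots y_{k}H_{1}$ is squarefree with the $y_{j}$ among its prime factors, every $\xi\in\Der(-\log H)$ is logarithmic for each $y_{j}$ and hence has exactly the above shape. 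As $H$ is a free divisor, pick a basis $\xi_{1},\dots,\xi_{k}$ of $\Der(-\log H)$ and write $\xi_{m}=\sum_{j}g_{jm}\,y_{j}\partial_{y_{j}}$; the corresponding Saito matrix is $\diag(y_{1},\dots,y_{k})\cdot(g_{jm})$, so after normalizing so that its determinant is $H$ we get $\det(g_{jm})=H_{1}$. Set $D_{m}:=\sum_{j}g_{jm}(f_{1},\dots,f_{k})\,E_{j}\in\Der(-\log\tilde H)$ for $m=1,\dots,k$. For the remaining $n-k$ derivations take the vertical ones: $V:=\{D\in\theta_{R}\mid D(f_{i})=0,\ i=1,\dots,k\}$ annihilates $\tilde H$ and sits inside $\Der(-\log f)$; since $f$ is squarefree, $D\mapsto(D(f_{i})/f_{i})_{i}$ is a well-defined $R$--linear surjection $\Der(-\log f)\to R^{k}$ with kernel $V$ sending $E_{j}\mapsto e_{j}$, so it splits and, $\Der(-\log f)$ being free of rank $n$, $V$ is free of rank $n-k$. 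Fixing a basis $D_{k+1},\dots,D_{n}$ of $V$, the tuple $D_{1},\dots,D_{n}$ is expressed in the $\Der(-\log f)$--basis $E_{1},\dots,E_{k},D_{k+1},\dots,D_{n}$ by the block matrix $\diag\!\big((g_{jm})(f_{1},\dots,f_{k}),\,I_{n-k}\big)$; multiplying on the left by a Saito matrix $A_{f}$ of $f$, normalized so that $\det A_{f}=f$, yields an $n\times n$ matrix $A$ over $R$ whose columns are the coefficient vectors of the $D_{m}$, with $\det A=f\cdot H_{1}(f_{1},\dots,f_{k})=H(f_{1},\dots,f_{k})=\tilde H$ and $(\nabla\tilde H)A\equiv0\bmod\tilde H$. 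With $\tilde H$ reduced, Theorem~\ref{saito} gives that $\tilde H$ is a free divisor.

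The genuinely delicate point is the reducedness of $\tilde H$: one must notice that the $E_{j}$ are not merely a device for writing down the Saito matrix but force $\varphi$ to be smooth off $V(f)$, and then combine this with the coprimality hypothesis through the $R_{0}+S_{1}$ criterion. Without reducedness the matrix $A$ above still has determinant $\tilde H$ and kills $\nabla\tilde H$ modulo $\tilde H$, yet Saito's criterion does not apply — for instance $A=(x^{2})$ would otherwise certify the non-reduced $x^{2}$.
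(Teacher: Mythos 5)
Your proposal is correct and follows essentially the same route as the paper's proof: the same splitting of $\Der(-\log f)$ into the span of the $E_{j}$ plus the vertical fields annihilating all $f_{i}$, the same chain-rule pullback of logarithmic fields via $y_{j}\mapsto f_{j}$ giving the factored Saito matrix with determinant $f\,\widetilde{H_{1}}$, and the same smoothness-of-$\varphi$-off-$\{f=0\}$ argument (via the identity $\mathrm{Jac}\cdot A=\diag(f_{1},\dots,f_{k})$) for reducedness of $\widetilde{H_{1}}$. Your only addition is to spell out the generic-reducedness-plus-$S_{1}$ step that the paper's appeal to the Jacobi criterion leaves implicit.
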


\begin{proof} Because $f$ is a free divisor, its $R$--module of logarithmic vector fields 
$\Der(-\log f)$ is free. It contains the vector fields $E_{i}$, because $E_{i}(f)= f$ by the 
product rule. 
Further, the $E_{i}$ are linearly independent over $R$, as $0 = \sum_{i=1}^{k}g_{i}E_{i} \in\theta$ 
implies $0=\sum_{i=1}^{k}g_{i}E_{i}(f_{j}) = g_{j}f_{j}$, and so $g_{j}=0$ for each $j$. 
In this way, $\oplus_{i=1}^{k}RE_{i}$ becomes a free submodule of $\Der(-\log f)$.

Now any $D\in \Der(-\log f)$ is logarithmic for each 
$f_{i}$ as those elements of $R$ are relatively prime, $f$ being squarefree. 
Therefore, $D\mapsto \sum_{i=1}^{k}D(\log f_{i})E_{i}$ provides
an $R$--linear map $\Der(-\log f)\to \oplus_{i=1}^{k}R E_{i}$ that splits the inclusion, 
and whose kernel consists of those derivations $D$ that satisfy $D(f_{i})=0$ for each $i$.

Therefore, we can extend the $E_{i}$ to a basis $(E_{1}, ..., E_{k}, D_{1}, ..., D_{n-k})$ of $\Der(-\log f)$ 
as $R$--module, with $D_{j}(f_{i})=0$ for $i=1,...,k$ and $j=1,...,n-k$. 

Let $C$ be the $n\times n$ matrix over $R$ that expresses the just chosen basis of $\Der(-\log f)$ 
in terms of the partial derivatives $\tfrac{\partial}{\partial x_{j}}$, for $j=1,...,n$, so that
\begin{align*}
(E_{1},..., E_{k}, D_{1}, ...,D_{n-k})= (\tfrac{\partial}{\partial x_{1}},...,\tfrac{\partial}{\partial x_{n}})C\,.
\end{align*}
The matrix $C$ is then a discriminant matrix for $f$, and, in particular, $\det C =f$.

Now we turn to $H\in Q$ and observe that any $D\in\Der_{Q}(-\log H)$, a logarithmic derivation for 
$H$ over $Q$, is necessarily of the form $D=\sum_{r=1}^{k}y_{r}b_{r}\frac{\partial}{\partial y_{r}}$
for suitable elements $b_{r}\in Q$, as $H$ contains by assumption $y_{1}\cdots y_{k}$ as a factor, 
whence $D(\log y_{r})= b_{r}$ must be in $Q$. In matrix form, a discriminant matrix for $H$ can be
factored as 
\begin{align*}
A:= \diag(y_{1},...,y_{k})B\,,
\end{align*}
where the first factor is the diagonal matrix with entries $y_{r}$ and $B=(b_{rs})$ is a 
$k\times k$ matrix over $Q$ so that the vector fields $\sum_{r}y_{r}b_{rs} \frac{\partial}{\partial y_{r}}$ 
form a $Q$--basis of $\Der_{Q}(-\log H)$. Because $\det A=H$ by Saito's criterion in Theorem
\ref{saito}, it follows that $\det B = H_{1}\in Q$.

Next note that the given $f_{i}$ define a substitution homomorphism $Q\to R$ that 
sends $y_{i}\mapsto f_{i}$. For any $b\in Q$, we denote $\tilde b=b(f_{1},..., f_{k})$ its image in 
$R$. We claim that a derivation $\tilde D := \sum_{r}\tilde b_{r}E_{r}$ is logarithmic for 
$\tilde H\in R$, if $D:= \sum_{r}y_{r}b_{r} \frac{\partial}{\partial y_{r}}$ is logarithmic for $H\in Q$. In 
fact, the usual chain rule for derivations yields first
\begin{align*}
\tilde D(\tilde H) &= \sum_{r=1}^{k}\tilde b_{r}E_{r}(\tilde H)\\
&= \sum_{r=1}^{k}\tilde b_{r}\sum_{s=1}^{k}\widetilde{\frac{\partial H}{\partial y_{s}}} E_{r}(f_{s})\\
&= \sum_{r=1}^{k}f_{r}\tilde b_{r}\widetilde{\frac{\partial H}{\partial y_{r}}} 
\end{align*}
as $E_{r}(f_{s})=\delta_{rs}f_{r}$ by assumption. Now the last term equals $\widetilde{D(H)}$, 
the image of $D(H)$ under substitution. Thus, if $D(H)$ is in $(H)\subseteq Q$, its image is in 
$(\tilde H)\subseteq R$, and so $\tilde D$ is indeed logarithmic for $\tilde H$.

On the other hand, if $D$ is a derivation on $R$ that vanishes on each $f_{i}$, then applying the 
chain rule yet again shows
\begin{align*}
D(\tilde H) &=\sum_{r=1}^{k}\widetilde{\left(\frac{\partial H}{\partial y_{r}}\right)} D(f_{r}) =0\,,
\end{align*}
whence such $D$ is in particular logarithmic for $\tilde H$. Putting everything together,
\begin{align*}
\left(\tfrac{\partial}{\partial x_{1}},...,\tfrac{\partial}{\partial x_{n}}\right)C
\left(\begin{matrix}
\tilde B&0\\
0&I_{n-k}
\end{matrix}
\right)\,,
\end{align*}
with $I_{n-k}$ the identity matrix of indicated size, represents $n$ logarithmic vector fields for 
$\tilde H$. Taking determinants, we get 
\begin{align*}
\det\left(C
\left(\begin{matrix}
\tilde B&0\\
0&I_{n-k}
\end{matrix}
\right)\right)
 = \det C\det \tilde B = \det C\widetilde{\det B} = 
f_{1}\cdots f_{k}\widetilde{H_{1}}=\tilde H\,.
\end{align*} 
Thus, the proof will be completed by Saito's criterion Theorem \ref{saito}, once we show that 
$\widetilde{H_{1}}$ is squarefree, as by assumption $f$ is already squarefree and relatively 
prime to $\tilde H_{1}$. To this end, we use the Jacobi criterion; see e.g. \cite[30.3]{Mat}.
The rank of the Jacobi matrix 
\[
\left( \frac{\partial f_{i}}{\partial x_{j}}\right)_{j=1,...,n}^{i=1,...,k}
\] 
is $k$ outside of $\{f=0\}$, as $E_{1}(f_{1})\cdots E_{k}(f_{k})=f$ is in the ideal of maximal minors
of that matrix. Therefore, $R$ is smooth over $Q$ outside of $\{f=0\}$, and the inverse image 
$\{\tilde H_{1}=0\}$ of $\{H_{1}=0\}$ remains thus reduced.
\end{proof}

We mention the following special case of Theorem \ref{chainrule} as an example.

\begin{corollary}
\label{corofg}   If $f\in K[x]=K[x_1,\dots,x_n]$ and $g\in K[y]=K[y_1,\dots,y_m]$ are 
free divisors that are weighted homogeneous, then $fg(f+g)\in K[x,y]$ is a free divisor. 
\qed
\end{corollary}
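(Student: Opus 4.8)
The plan is to obtain the corollary as the case $k=2$ of the chain rule, Theorem \ref{chainrule}, applied with $f_{1}=f$ and $f_{2}=g$ regarded as elements of $R=K[x,y]$, and with outer free divisor $H:=y_{1}y_{2}(y_{1}+y_{2})\in Q:=K[y_{1},y_{2}]$, so that $H(f_{1},f_{2})=fg(f+g)$ is exactly the polynomial in question. Thus the work consists of checking, one by one, that the hypotheses of Theorem \ref{chainrule} are met.

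First I would produce the vector fields the theorem requires. Since $f\in K[x]$ is weighted homogeneous of non-zero degree $d_{f}$ with respect to some weight $w$ on the $x$-variables, the scaled Euler field $E_{1}:=d_{f}^{-1}E_{w}$ involves only the $\partial/\partial x_{i}$ and satisfies $E_{1}(f)=f$; because it does not touch the $y$-variables, $E_{1}(g)=0$. Symmetrically, the weighted homogeneity of $g$ furnishes a vector field $E_{2}$ in the $y$-variables with $E_{2}(g)=g$ and $E_{2}(f)=0$. Hence $E_{j}(f_{i})=\delta_{ij}f_{i}$ for $i,j\in\{1,2\}$. Next, $f_{1}f_{2}=fg$ is a free divisor in $R$: this is the product-union (``splayed'') observation at the start of Section \ref{chainrule}'s section, obtained by placing the discriminant matrices of $f$ and $g$ in block-diagonal position. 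Finally, $H=y_{1}y_{2}(y_{1}+y_{2})$ is a reduced polynomial in two variables and therefore a free divisor, and it has the shape $y_{1}y_{2}H_{1}$ with $H_{1}=y_{1}+y_{2}$, so $H_{1}(f_{1},f_{2})=f+g$.

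It remains to verify the coprimality hypothesis: $f_{1}f_{2}=fg$ and $H_{1}(f_{1},f_{2})=f+g$ must have no common factor in $R$. Here is where the disjointness of the variable sets is actually used. An irreducible common factor $p$ divides $fg$, hence $p\mid f$ or $p\mid g$; in the first case $p\mid f+g$ forces $p\mid g$, in the second $p\mid f$, so in either case $p$ divides both $f$ and $g$. But a factor of $f\in K[x]$ has $y$-degree $0$ and a factor of $g\in K[y]$ has $x$-degree $0$, so $p\in K[x]\cap K[y]=K$, contradicting that $p$ is a non-unit. With all hypotheses in place, Theorem \ref{chainrule} yields that $\tilde H=H(f_{1},f_{2})=f_{1}f_{2}(f_{1}+f_{2})=fg(f+g)$ is a free divisor.

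I do not expect a genuine obstacle here: the corollary is essentially a translation of its own statement into the language of Theorem \ref{chainrule}, and the determinant/Saito bookkeeping that would normally be the delicate part is already carried out inside that theorem. The only point needing a (short, routine) argument is the coprimality of $fg$ and $f+g$; everything else — the two Euler fields, the free-divisor property of $fg$, and the recognition of $y_{1}y_{2}(y_{1}+y_{2})$ as a plane-curve free divisor — is immediate.
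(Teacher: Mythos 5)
Your proposal is correct and follows exactly the route the paper intends: the corollary is stated there as an immediate special case of Theorem \ref{chainrule} (with $f_{1}=f$, $f_{2}=g$, $H=y_{1}y_{2}(y_{1}+y_{2})$), and your verification of the Euler fields, the splayed freeness of $fg$, and the coprimality of $fg$ with $f+g$ is precisely the routine checking the paper leaves to the reader.
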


\begin{remark}
\label{faber}
In the original treatment of Theorem \ref{chainrule} in \cite{MS}, the hypothesis that $f$ and 
$H_{1}(f_{1},..., f_{k})$ are without common factor is missing. 
That hypothesis is, however, necessary, as is shown by the following example that Eleonore Faber 
kindly provided.

Take $f_1= (1+u)(x^2-y^3), f_2=(1+v)(y^2-x^3)$, and $f_3=(1+w)(f_1^3+f_2^2)$ in $R=K[x,y,u,v,w]$. 
A calculation in {\sc Singular\/} shows readily that $f=f_1 f_2 f_3$ is a free
divisor. The vector fields $E_{1}=(1+u)\partial/\partial u, 
E_{2}=(1+v)\partial/\partial v$, and $E_{3}=(1+w)\partial/\partial w$ certainly satisfy $E_{i}(f_{j})=\delta_{ij}f_{i}$.

Now take $H(y_1,y_2,y_3)=y_1y_2y_3(y_1^3+y_2^2)$, a binomial free divisor according to Theorem
\ref{binofree} below, and observe that 
\[H(f_1,f_2,f_3)=f_1f_2f_3(f_1^3+f_2^2)=f_1f_2(1+w)(f_1^3+f_2^2)^2
\] 
is not reduced, thus, is not a free divisor, as $f$ and $H_{1}(f_{1},f_{2},f_{3})$ have the factor $f_1^3+f_2^2$ 
in common.
\end{remark}

 \section{Triangular Free Divisors}
 
Let $K$ be   a field of characteristic zero. Assume given a ``seed''
$F_{0}\in R:= K[y_{1},\dots,y_{n}]$ and define inductively for $i > 0$ polynomials
\begin{align*}
F_{i} := \alpha_{i}x_{i}^{a_{i}} + \beta_{i}F_{i-1}^{b_{i}}\in Q:= R[x_{1},\dots,x_{i}]
\end{align*}
for natural numbers $a_{i},b_{i} > 0$ and $\alpha_{i},\beta_{i}\in K$ with $\alpha_{i}\neq 0$.

\begin{proposition}
\label{trianfree}
Assume $F_{0}$ is a free divisor in $R$ with discriminant $(n\times n)$--matrix $A$
over $R$. If $F:= F_{i}F_{i-1}\cdots F_{0}$ is reduced, then it is a free divisor over $Q$ with
``triangular'' discriminant matrix of the form
\begin{align*}
B =
\left(\begin{matrix}
A & 0&0&\cdots&0 \\
* & F_{1} & 0 &\cdots&0\\
\vdots & \vdots & \ddots &\ddots&\vdots \\
* & * & * & F_{i-1}&0\\
* & * & * & * & F_i
\end{matrix}\right)
\end{align*}
where the  entries marked ``$*$'' represent elements of $Q$ that can be calculated explicitly.
\end{proposition}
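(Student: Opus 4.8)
The plan is to prove the proposition by induction on $i$, verifying at each step that $B$ is a genuine Saito matrix for $F$ in the sense of Theorem \ref{saito}, namely that $\det B = F$ and $(\nabla F) B \equiv 0 \bmod (F)$, where $\nabla$ is taken with respect to the full variable set $y_1,\dots,y_n,x_1,\dots,x_i$.

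The base case $i=0$ is the hypothesis that $F_0$ is a free divisor with Saito matrix $A$. For the inductive step, suppose $F' := F_{i-1}F_{i-2}\cdots F_0$ is a free divisor over $R[x_1,\dots,x_{i-1}]$ with triangular Saito matrix $B'$ of the stated shape, and set $F = F_i F' = (\alpha_i x_i^{a_i} + \beta_i F_{i-1}^{b_i}) F'$. The new matrix $B$ is formed by bordering $B'$ with one extra row and column: the last diagonal entry is $F_i$, the off-diagonal entries in the new column (above $F_i$) are zero, and the new bottom row has a prescribed entry in the $x_i$-column together with entries marked $*$ in the old columns. The determinant claim $\det B = F_i \det B' = F_i F' = F$ is then immediate from cofactor expansion along the last column (since the only nonzero entry there is $F_i$ in the corner). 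So the real content is the logarithmic condition.

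For the syzygy condition, the natural strategy is to exhibit, column by column, the $n+i$ logarithmic vector fields that $B$ encodes. The first $n+i-1$ columns of $B$ extend the columns of $B'$: a derivation $D'$ logarithmic for $F'$ satisfies $D'(F') \in (F')$, and I want to modify it by adding a multiple of $\partial/\partial x_i$ (this is where the $*$ entries in the bottom row come in) so that the resulting $D$ also satisfies $D(F_i) \in (F_i)$, hence $D(F) = D(F_i)F' + F_i D'(F') \in (F)$. Concretely, since $D'$ involves only $y$'s and $x_1,\dots,x_{i-1}$, one computes $D'(F_i) = \beta_i b_i F_{i-1}^{b_i-1} D'(F_{i-1})$; using that $D'$ is logarithmic for the factor $F_{i-1}$ of $F'$ one writes $D'(F_{i-1}) = c\, F_{i-1}$ for some $c \in Q$, giving $D'(F_i) = \beta_i b_i c F_{i-1}^{b_i}= b_i c (F_i - \alpha_i x_i^{a_i})$, so the correction term must be chosen to cancel the $-\,b_i c \alpha_i x_i^{a_i}$ part, i.e. one adds $\tfrac{b_i c}{a_i} x_i \,\partial/\partial x_i$ (here one uses $a_i \neq 0$ in characteristic zero and $\alpha_i \neq 0$); the resulting derivation is then logarithmic for $F_i$, with $D(F_i) = b_i c F_i$. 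The last column, with diagonal entry $F_i$ and zeros above, corresponds to the Euler-type derivation $x_i^{a_i}\frac{\partial}{\partial x_i}$ scaled appropriately — more precisely the entry $F_i$ in the $x_i$-row means the vector field $F_i \frac{\partial}{\partial x_i}$, which trivially satisfies $(F_i \partial/\partial x_i)(F) = F_i F_i' \cdot \text{(stuff)} \in (F)$ — actually the cleanest choice is the derivation $\tfrac{1}{a_i} x_i \partial/\partial x_i$ acting on $F_i$, giving $\tfrac{1}{a_i}(\alpha_i a_i x_i^{a_i}) = F_i - \beta_i F_{i-1}^{b_i}$; one sees the last column should really encode $a_i^{-1} x_i \partial/\partial x_i$ modified by the columns of $B'$ to kill the $F_{i-1}^{b_i}$ term, which is exactly the role of the $*$'s in that column — so I should double-check the exact normalization of the last column against what makes $\det B = F$ work, adjusting scalars as needed. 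Throughout, reducedness of $F$ is assumed in the hypothesis, so Theorem \ref{saito} applies once $\det B = F$ and $(\nabla F)B \equiv 0 \bmod (F)$ are established.

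The main obstacle I anticipate is bookkeeping the $*$ entries so that they simultaneously (i) turn each old logarithmic derivation of $F'$ into one for $F_i$, and (ii) preserve triangularity and the determinant. The cleanest route is probably not to track the $*$'s explicitly but to argue abstractly: show $\Der(-\log F) \supseteq \Der(-\log F') \cap \Der(-\log F_i)$-type containments give enough free rank, using that $F_i$ depends on $x_i$ with $\partial F_i/\partial x_i = \alpha_i a_i x_i^{a_i-1}$ a nonzerodivisor, so $\Der(-\log F_i)$ is itself free over $Q$ with an explicit basis (one Euler-type field $a_i^{-1}x_i\partial_{x_i} + (\text{correction in } F_{i-1})$ plus the logarithmic fields of $F_{i-1}$ lifted). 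Intersecting the explicit bases and writing the transition matrix to $(\partial_{y_1},\dots,\partial_{y_n},\partial_{x_1},\dots,\partial_{x_i})$ yields precisely the block-triangular $B$; the determinant is then automatically $F$ by the computation above, and Saito's criterion closes the argument. One should also note the $*$ entries land in $Q$ (not a localization) precisely because $c = D'(F_{i-1})/F_{i-1} \in Q$ already and $x_i/a_i$ is a polynomial — this is the point where $a_i,\alpha_i \ne 0$ and $\mathrm{char}\,K = 0$ are used.
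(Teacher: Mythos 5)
Your proposal is correct and follows essentially the same route as the paper: induction on $i$, extending each logarithmic field $D'$ of $F'=F_{i-1}\cdots F_0$ by the correction $\tfrac{b_i c}{a_i}x_i\,\partial/\partial x_i$ with $c=D'(F_{i-1})/F_{i-1}$ (exactly the paper's Lemma \ref{trilemma}), adjoining the field $F_i\,\partial/\partial x_i$ as the new column, and invoking Saito's criterion since $F$ is reduced and the determinant is $F$ by triangularity. Your first reading of the last column is the right one (the paper indeed uses $F_i\,\partial/\partial x_i$, so the later second-guessing about a normalization via $a_i^{-1}x_i\partial_{x_i}$ is unnecessary), and your correction term matches the one derived in the paper's proof.
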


\begin{proof}
First observe that the determinant of the displayed matrix certainly equals $F$.
It thus remains to prove that the we can choose the columns to represent logarithmic vector fields for it.

The proof proceeds by induction on $i\geqslant 0$, the case $i=0$ being true by assumption.
For $i\geqslant 1$, set $G = F/F_{i}$ and assume that the result is correct for $G$.
The last column in $B$ represents the vector field $D=F_{i}{\partial}/{\partial x_{i}}$ and 
we show now that it is  a logarithmic vector field for $F$, that is, $F$ divides $D(F)$:
\begin{align*}
D(F) &= D(F_{i})G= F_{i}\frac{\partial F_{i}}{\partial x_{i}} G = \left(\frac{\partial F_{i}}{\partial x_{i}}\right) F\,,
\end{align*}
the first equality due to the fact that $G$ is independent of $x_{i}$.

To finish the proof, it suffices now to establish the following:

\begin{lemma}\label{trilemma}
Let $D$ be a logarithmic vector field for $G$ as an element of $R[x_{1},\dots,x_{i-1}]$.
\begin{enumerate}[\rm (1)]
\item $D$ is a logarithmic vector field for each factor $F_{0},\dots,F_{i-1}$ of $G$,
so that $c_{F_{j}}:= D(F_{j})/F_{j}\in R[x_{1},\dots,x_{i-1}]$ for each $j=0,\dots,i-1$.
\item The vector field 
\begin{align*}
\tilde D = \frac{b_{i}c_{F_{i-1}}}{\alpha_{i}a_{i}} x_{i}\frac{\partial}{\partial x_{i}} + D
\end{align*}
is the unique extension of $D$ to a logarithmic vector field for $F$ in $Q$. It satisfies
\begin{align*}
\tilde D(F) = \big((b_{i}+1)c_{F_{i-1}}+\sum_{j=0}^{i-2}c_{F_{j}}\big)F\,.
\end{align*}

\end{enumerate}
\end{lemma}

\begin{proof}
The first part was already pointed out above:
if $D$ is any logarithmic vector field for a product $fg$ of coprime 
factors, then it is necessarily a logarithmic vector field for each factor. 

Now we turn to the derivation $D$ given in the statement.
Assume there is an extension $\tilde D = u\frac{\partial}{\partial x_{i}} + D$ 
of $D$ to a logarithmic vector field for $F$. We then get first from the product rule
\begin{align*}
\tilde D(F) &= \tilde D(F_{i})G + F_{i}\tilde D(G)\,,
\intertext{and by definition of $\tilde D$ and $F_{i}$ this evaluates to}
&= \left(u\alpha_{i}a_{i}x_{i}^{a_{i}-1} + \beta_{i}b_{i}F_{i-1}^{b_{i}-1}D(F_{i-1})\right)G + F_{i}D(G)
\intertext{as $\tilde D(H)=D(H)$ for $H$ equal to either $F_{i-1}$ or $G$,}
&=  \left(u\alpha_{i}a_{i}x_{i}^{a_{i}-1} + \beta_{i}b_{i}c_{F_{i-1}}F^{b_{i}}_{i-1}\right)G + c_{G}F_{i}G
\end{align*}
as $D$ is respectively logarithmic for $F_{i-1}$ and for $G$ with the indicated multipliers.

Due to $F = F_{i}G$, we see that $\tilde D(F)$ will be a multiple of $F$ if, and only if, 
$F_{i}= \alpha_{i}x_{i}^{a_{i}} + \beta_{i}F_{i-1}^{b_{i}}$ divides $u\alpha_{i}a_{i}x_{i}^{a_{i}-1} + \beta_{i}b_{i}c_{F_{i-1}}F^{b_{i}}_{i-1}$, if, and only if,
\begin{align*}
u = b_{i}c_{F_{i-1}}x_{i}/a_{i}\,,
\end{align*}
and in that case
\begin{align*}
\tilde D(F) = (b_{i}c_{F_{i-1}}+c_{G})F\,.
\end{align*}
It follows that 
\begin{align*}
\tilde D := \frac{b_{i}c_{F_{i-1}}}{a_{i}} x_{i}\frac{\partial}{\partial x_{i}} + D
\end{align*}
is the unique extension of $D$ to a logarithmic vector field for $F$ as claimed.
Finally, observe that the multiplier in question is
\begin{align*}
c &:= \frac{\tilde D(F)}{F} = b_{i}c_{F_{i-1}} + c_{G} \\
&= b_{i}c_{F_{i-1}}  + \sum_{j=0}^{i-1}c_{F_{j}}\\
&= (b_{i}+1)c_{F_{i-1}}  + \sum_{j=0}^{i-2}c_{F_{j}}
\end{align*}
and that finishes the proof.
\end{proof}
To end the proof of Proposition \ref{trianfree}, if the result holds for $i-1$, we extend the column 
that represents the logarithmic vector field $D$  for $G=F_{i-1}\cdots F_{0}$ in the
displayed discriminant matrix by adding the corresponding 
coefficient  $\frac{b_{i}c_{F_{i-1}}}{a_{i}} x_{i}$ of $\partial/\partial x_{i}$ in $\tilde D$ as the entry 
in the last row of the discriminant matrix for $F$.
\end{proof}
Note that in Proposition \ref{trianfree}  we may take as seed $F_{0}$ any reduced polynomial in two 
variables. 
%------BILD--------   2
\begin{figure}[!h]
\begin{tabular}{c@{\hspace{1.5cm}}c}
\includegraphics[width=0.39 \textwidth]{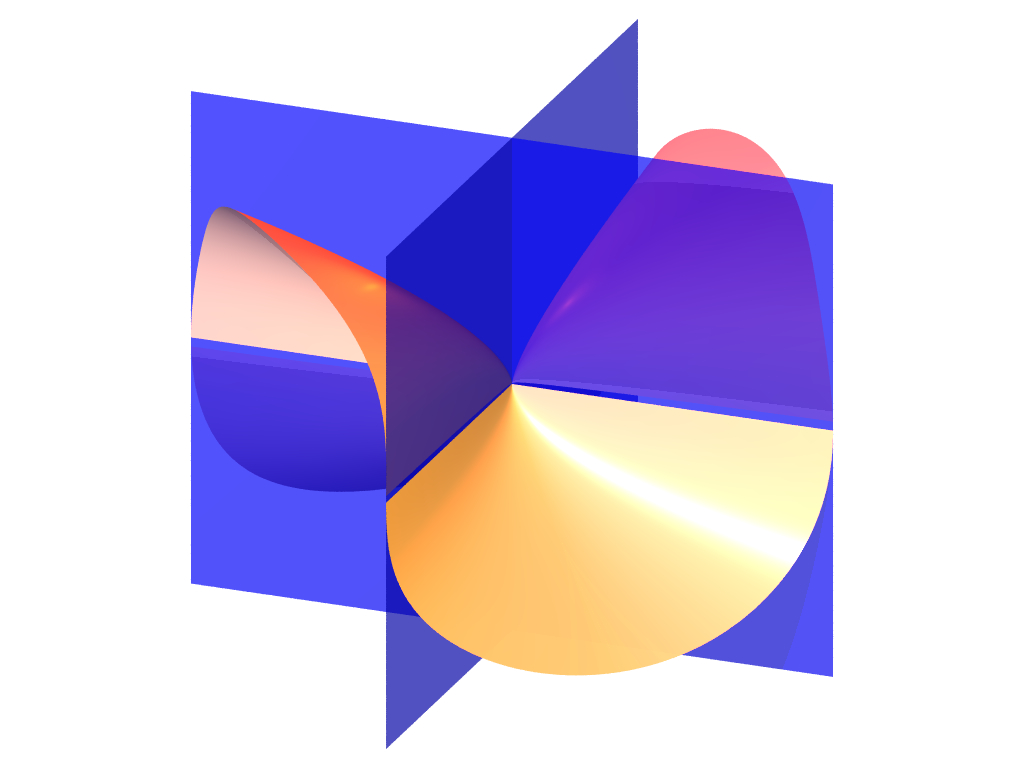} & 
\includegraphics[width=0.39 \textwidth]{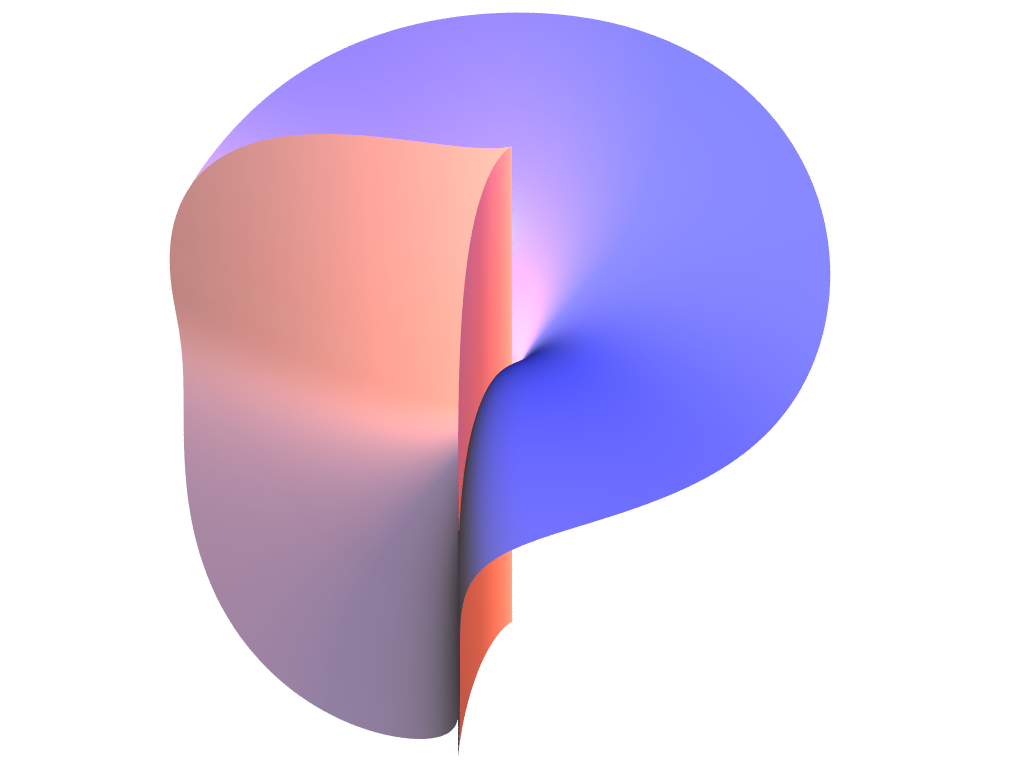}
\end{tabular}
\caption{ \label{fig:triang} The union of a cylinder over an $A_1$-curve and an $A_2$-surface given by 
$h=(x^2-y^2)(x^2-y^2+z^3)$ (left) and the union of a cylinder over an $A_2$-curve and  an $E_8$-surface 
given by $h=(x^2+y^3)(x^2+y^3-z^5)$ (right).}
\end{figure}
%---------------------

\begin{example}
\label{sumpow}
Given positive integers $t_{1},..., t_{i}$, for $j=2,\dots,i$, set $G_j=x_{1}^{t_{1}}+\cdots +x_{j}^{t_{j}}$.  
Take $F_{0}=G_2$  as a seed and set $a_{j}=t_{j+2}, b_{j}=\alpha_{j}=\beta_{j}=1$ to obtain 
$F_{j}=G_{j+2}$ for $j=0,...,i-2$. The resulting product  $G =G_2\cdots G_{i}$ of Brieskorn--Pham 
polynomials is a free divisor by Proposition \ref{trianfree}. 

One can easily calculate the entries of the discriminant matrix. 
To illustrate, we treat the case where each exponent is $t=2$, so that 
$G_j=x_{1}^{2}+\cdots +x_{j}^{2}$.

The first column can be taken as representing the usual Euler vector field that is the unique 
extension of the Euler vector field for $G_{2}$. The second column can be taken to correspond to 
the vector field $D = -x_{2}\partial/\partial x_{1} + x_{1}\partial/\partial x_{2}$ that in turn corresponds 
to the automorphism interchanging $x_{1}$ and $x_{2}$. As for this $D$ one has $D(G_{2}) = 0$, 
Lemma \ref{trilemma}  shows that the corresponding matrix entries below the second row will be 
zero as well.

Now we indicate how to obtain the entries of columns $3$ through $i$.
Counting from the top, start with $D = G_{j}\partial/\partial x_{j}$, thus, putting $G_{j}$ as the entry 
in the $j^{th}$ row as first nonzero entry in column $j\geqslant 3$, and note that 
$D(G_{j}) = 2 x_{j}G_{j}$, so that $c_{G_{j}}= 2x_{j}$. By Lemma \ref{trilemma}, 
the entry below it will be
\begin{align*}
a_{j+1,j} =  \frac{b_{j+1}c_{G_{j}}}{a_{j+1}} x_{j+1} =
 \frac{c_{G_{j}}}{2} x_{j+1} =  x_{j}x_{j+1}
\end{align*}
Now $c_{G_{j+1}}= 2 x_{j}$ again, and induction shows that a relevant discriminant matrix can be taken in the form
\begin{align*}
B = 
\left(\begin{array}{cccccc}
x_{1} & -x_{2} & 0 & 0  &\cdots & 0 \\
x_{2} & x_{1} & 0 &   0  &\cdots & 0 \\
x_{3} & 0 & G_{3} &  0 &\cdots &0 \\
x_{4} & 0 & x_{3}x_{4} & G_{4} & \ddots &  \vdots\\
\vdots & \vdots & \vdots & \vdots & \ddots & 0 \\x_{i} & 0 & x_{3}x_{i} & x_{4}x_{i} &\cdots   & G_{i} \\  &   &   &   &   &  \end{array}\right)
\end{align*}

\end{example}

\section{Binomial Free Divisors}\label{binocla} 

The goal of this section is to investigate binomials $(ux^{a}+ vx^{b})x^{c}$, with $u,v\in K, uv\neq 0$, 
and exponent vectors $a,b,c$ with $|a|,|b|\geqslant 1$,  $\min(a_{i},b_{i})=0$, that are free divisors. 
This forces each entry of $c$ to be in $\{0,1\}$ and we can absorb the constants $u,v$ into the 
variables to reduce to the form $F=L(M+N)$, where $L$ is a product of distinct variables and $M,N$ 
are coprime monomials. 

We further assume $R=K[x_1,\dots,x_{n+2}]$, with $K$ as usual a field of characteristic $0$, 
and we may suppose that $F$ involves all the variables, as otherwise it is just a suspension of a 
divisor that satisfies this requirement.

With these preparations we show the following result.
\begin{theorem} 
\label{binofree}
The binomial  $F=L(M+N)$ as above is a free divisor if 
% and only if
\begin{enumerate}[\rm (a)]
\item at most one of the variables appearing in $M$ does not appear  in $L$, and
\item at most one of the variables appearing in $N$ does not appear  in $L$.
\end{enumerate}
Note that if $F$ is required to involve all variables, then these conditions imply $\deg L\geqslant n$.

If $F$ is a {\em homogeneous\/} binomial, that is, $\deg M=\deg N$, then the preceding sufficient conditions are also necessary. 
%\begin{itemize}
%\item[(1)] $\deg L\geq n$,
%\item[(2)] at most one of the varibles appearing in $M$ does not appear  in $L$, and
%\item[(3)] at most one of the varibles appearing in $N$ does not appear  in $L$.
%\end{itemize}
%If $F$ is homogeneous, thus, $\deg M=\deg N$, these three conditions are also necessary.
\end{theorem}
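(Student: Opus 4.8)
The plan is to split the statement into a sufficiency part and a necessity part. For \textbf{sufficiency}, I would construct an explicit discriminant matrix. Write $F = L(M+N)$ with $L = x_1\cdots x_\ell$ a product of distinct variables, and $M, N$ coprime monomials. Conditions (a) and (b) say that $M = (\prod_{i} x_i^{m_i}) y^\alpha$ and $N = (\prod_i x_i^{n_i}) z^\beta$, where $y, z$ are the (at most one each) extra variables not dividing $L$, and all remaining variables of $M$ and $N$ already occur in $L$; since $\min(m_i,n_i)$ may be positive here we just keep track of exponents. I would first dispose of the variables that divide $L$: each such variable $x_i$ contributes a logarithmic vector field, either $x_i\partial_{x_i}$ outright or, after combining with the Euler field coming from $M+N$, a suitable linear combination $x_i\partial_{x_i} - (\cdots)$ that kills $M+N$; this is exactly the ``many Euler vector fields'' situation of Section 3 applied to the factor $M+N$ (which is a binomial, hence covered by Proposition \ref{BuRim} and its following Example once we check the maximal-minor condition), combined with the splitting observation that a vector field logarithmic for a squarefree product is logarithmic for each factor. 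The remaining one or two directions $y, z$ are handled by the elementary computation that $\alpha^{-1}M\partial_y$ contributes (it is logarithmic for $M+N$ since $\partial_y$ applied to $M$ is $\alpha M/y$) — more precisely one takes the vector field $\tfrac{1}{\alpha} y\partial_y - \tfrac{1}{\beta} z\partial_z$ restricted appropriately, plus one genuinely non-Euler field. Assembling all $n+2$ of these and taking the determinant, the product of the diagonal-type entries recovers $L$ times $(M+N)$ up to a nonzero scalar; Saito's criterion (Theorem \ref{saito}) then finishes sufficiency. The bookkeeping that $\deg L \ge n$ is automatic: there are $n+2$ variables and at most two lie outside $L$.

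For \textbf{necessity} in the homogeneous case, I would argue by contradiction: suppose $F = L(M+N)$ is a homogeneous free divisor but, say, condition (a) fails, so $M$ contains at least two variables, call them $y_1, y_2$, that do not divide $L$. The key is to produce an obstruction via the Jacobian ideal or via the logarithmic module. Concretely, since $F$ is homogeneous I would use the criterion from Section 3: $F$ is annihilated by many Euler vector fields (the binomial $M+N$ has an $(\dim -2)$-dimensional space of annihilating Euler fields, and $L$ being a product of variables imposes nothing extra), so Proposition \ref{BuRim}(1) says freeness is equivalent to the vanishing of the class of $\nabla F$ in $H_1(\mathbf{BR}(B))$. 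The plan is to show this class is nonzero when (a) fails. The cleanest route: pass to the localization or quotient that isolates the two bad variables — set all variables dividing $L$ (and the $z$-side variables of $N$) to generic nonzero constants, reducing $F$ essentially to $x_1\cdots x_n \cdot (\text{monomial in } y_1,y_2,\dots + \text{monomial})$ with the first monomial a non-squarefree pure power in two honest extra variables; then invoke Theorem \ref{poly3}(2) or its three-variable analysis, whose ``if and only if'' condition ($f_x \in (y,z)$) fails exactly when a pure higher power of a single extra variable appears. Alternatively, and perhaps more robustly, one checks directly that the putative Hilbert–Burch matrix forced by the Euler structure cannot have its remaining syzygy column filled in — i.e.\ $\nabla F$ is not in the image of $\epsilon$ — by a degree/monomial count in the Buchsbaum–Rim complex.

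The \textbf{main obstacle} I anticipate is the necessity direction, specifically making the ``failure of condition (a) obstructs freeness'' argument clean and variable-count-independent. The sufficiency construction is essentially an exercise in assembling vector fields we already know how to write down (it is the binomial case of the earlier Example plus the splaying trick), so while the determinant bookkeeping is fussy it is not deep. For necessity, the temptation is to reduce to three variables and quote Theorem \ref{poly3}, but one must be careful that the specialization (setting the ``good'' variables to constants) genuinely preserves the non-freeness — setting variables to constants can only help freeness, so I would instead argue in the opposite direction, showing that if $F$ were free then the specialized polynomial would have to be free, contradicting Theorem \ref{poly3}(2); this requires knowing that a discriminant matrix specializes to a discriminant matrix, which follows from $\det$ and the congruence $(\nabla F)A \equiv 0 \bmod (F)$ being preserved under ring homomorphisms provided $F$ stays reduced and stays in the ideal of its partials after specialization — a point that needs the genericity of the constants. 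I would allocate most of the write-up to nailing this specialization step and the explicit check that the resulting three-variable (or small-variable) polynomial has a pure power of one extra variable, hence fails the criterion.
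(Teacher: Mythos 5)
Your sufficiency outline is essentially the paper's argument: one writes $F=x_1\cdots x_ny^uz^tG$ with $G=x^ay^\alpha+x^bz^\beta$ and assembles $n+2$ logarithmic columns — for each $x_i$ dividing $L$ a relation $x_iF_{x_i}+v_izF_z\equiv 0\bmod (F)$ (the ``many Euler fields'' observation), the relation $\beta yF_y+\alpha zF_z\equiv 0\bmod(F)$, and one exact syzygy between $F_y$ and $F_z$ — and then checks that the determinant of the resulting matrix is $(\alpha\beta+u\beta+t\alpha)F\neq 0$, so Saito's criterion applies. One detail in your sketch is wrong, though fixable: the field $\alpha^{-1}M\,\partial_y$ is \emph{not} logarithmic for $M+N$, since it sends $M+N$ to $M^2/y\equiv -(M/y)N\not\equiv 0\bmod (M+N)$; the correct ``genuinely non-Euler'' column is the Hamiltonian-type field $-\tfrac{\beta N}{z}\partial_y+\tfrac{\alpha M}{y}\partial_z$ (adjusted by the factors $y^u,z^t$ when $y$ or $z$ divides $L$), which is exactly the last column of the paper's matrix.

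The genuine gap is in the necessity direction. Your argument rests on the claim that if $F$ is free then the polynomial obtained by setting the ``good'' variables to generic nonzero constants is again free, justified by ``a discriminant matrix specializes to a discriminant matrix.'' That justification does not work: after substituting $x_n\mapsto c$, the specialized vector $(\nabla F)|_{x_n=c}$ still contains $(\partial F/\partial x_n)|_{x_n=c}$, which is \emph{not} a partial derivative of the specialized polynomial, so the congruence $(\nabla F)A\equiv 0\bmod(F)$ does not specialize to a Saito relation for $F|_{x_n=c}$; and generic slices of free divisors are not free in general, so the heuristic ``setting variables to constants can only help freeness'' has no basis. Worse, if you specialize away all of $L$ and the $N$-side variables you can land in two or three variables where every reduced curve (or its suspension) is free, so no contradiction can be extracted; and your fallback — showing the class of $\nabla F$ is nonzero in $H_1(\mathbf{BR}(B))$ ``by a degree/monomial count'' — is precisely the statement to be proved, with no computation supplied. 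The paper's necessity proof is a short direct ideal-theoretic argument you are missing: in the bad case one may write $F=x^ay^\alpha z^\beta+x^b$ with $\alpha,\beta>0$, and then $(y,z)\subseteq\bigl(J(F):x^ay^{\alpha-1}z^{\beta-1}\bigr)$. Since $J(F)$ is perfect of codimension $2$, either $(y,z)$ is a minimal prime of $J(F)$ — impossible, as $F\in J(F)\subseteq(y,z)$ would force $x^b\in(y,z)$ — or $x^ay^{\alpha-1}z^{\beta-1}\in J(F)\subseteq(y^{\alpha-1}z^{\beta},\,y^{\alpha}z^{\beta-1})+(\partial x^b/\partial x_i\,;\,i=1,\dots,n)$, whence that monomial is divisible by some $\partial x^b/\partial x_i$, which is impossible because homogeneity ($\deg M=\deg N$) makes $\partial x^b/\partial x_i$ of strictly larger degree. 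Without this (or some genuine substitute), the ``only if'' half of the theorem is not established.
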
 

\begin{proof}  For the first claim, we can write, up to a permutation of the variables and 
setting $y=x_{n+1}$ and $z=x_{n+2}$,
\[
F=x_1\cdots x_n y^uz^t G
\]
where 
\[
G=x^ay^\alpha+x^b z^\beta
\]
and  $a,b\in  {\bf N}^n$  with $\min(a_i,b_i)=0$, $\alpha,\beta>0$ and $u,t\in \{0,1\}$. Let $V$ be the 
$K$ vector space generated by the monomials $x_1\cdots x_n x^{a}y^{u+\alpha}z^t$ and
$x_1\cdots x_n x^{a}y^{u}z^{t+\beta}$ involved in $F$. Obviously, $V$ is $2$-dimensional,
the elements $F, zF_z$ form a basis, and $V$ contains $x_{i}F_{x_{i}}$  for each $i=1,\dots,n+2$. 
So we get relations
\begin{align}
&x_{i}F_{x_{i}} +v_i zF_{z}\equiv 0  \bmod (F)\,,
% \qquad (1)
\intertext{
with some $v_i\in K$, for $i=1,\dots n$.  
%The $v_i$ play no role in the argument below.  
Now note that}
&F_y=x_1\cdots x_nz^t(uG+\alpha x^ay^{\alpha-1+u}) 
%\qquad (3)$$
\intertext{and}
&F_z =x_1\cdots x_ny^u(tG+\beta  x^bz^{\beta-1+t})
\end{align}
whence we also get relations
\begin{align}
\beta  yF_{y}&+\alpha    zF_{z}\equiv 0  \bmod (F)  
\intertext{and}
-y^u(tG+\beta x^bz^{\beta-1+t}) F_{y} &+ z^t(uG+\alpha x^ay^{\alpha-1+u}) F_{z}=0\,.
\end{align}
%\qquad (2) $$. 
%
%Furthermore note that: 
%
%and 
%
%$$F_z=x_1\cdots x_ny^u(tG+\beta  x^bz^{\beta-1+t}) \qquad (4)$$
%
Collecting this information in the $(n+2)\times (n+2)$ matrix
\[
A=\left(\begin{array}{cccccc}
 x_1  & 0     &      &  \dots & 0 &  0 \\
 0     &  x_2  & 0 &  \dots & 0 & 0  \\
  \vdots  & & & & & \vdots \\
 0     &  0  &  \dots  &  x_{n} & 0 & 0 \\
 0     &  0  &  \dots       &  0          &  \beta  y  &\ \ \  -y^u(tG+\beta x^bz^{\beta-1+t})  \\
 v_1z&  v_2z& \dots  &   v_nz  &     \alpha z& \ \  \   z^t(uG+\alpha x^ay^{\alpha-1+u}) \end{array}\right)
\]
it follows from (1) and (4) that the first $n+1$ entries of $(\nabla F) A$ are congruent to $0$ modulo 
$F$, while (5) implies that the last entry of  $(\nabla F) A$ equals $0$ already in $R$. 
%Then (1), (2)   imply that the first $n+1$ entries of  $\nabla F A$ are $0$ modulo $F$ and  (3), (4) 
%imply  that the last entry of  $\nabla F A$ is $0$ already in $R$.  So $\nabla F A=0$ mod $F$. 
Finally, it is straightforward that 
\[
\det A=(\beta\alpha+u\beta+t\alpha)F\quad\text{and}\quad \beta\alpha+u\beta+t\alpha \neq 0\,,
\]
whence we conclude from Saito's criterion in Theorem \ref{saito}  that $F$ is a free divisor. 

Next we show that if  $F$ is a {\em homogeneous\/} free divisor  then  conditions  (a), (b) are satisfied.
We argue by contradiction. Suppose that $F$ is a free divisor that involves all variables, but
fails one of the  conditions  (a) or (b). By symmetry, and after permutating the variables, we may
assume that $F$ is of the form: 
\[
F=x^ay^\alpha z^\beta+x^b\,,
\]
where we set $y=x_{n+1}, z=x_{n+2}$ as before, and $a,b\in \NN^n, \alpha>0, \beta>0$. 
With $J$ again the Jacobian ideal of $F$, note that $(y,z) \subseteq (J:x^a y^{\alpha-1} z^{\beta-1})$.
Since $J$ is perfect of  codimension $2$, either $(y,z)$ is a minimal prime of $J$ or 
$ x^a y^{\alpha-1} z^{\beta-1}\in J$. In the former case, $F\in J\subset (y,z)$ implies $x^b\in (y,z)$, and 
that is impossible. In the latter case, 
\[
x^a y^{\alpha-1} z^{\beta-1}\in J \subseteq 
(y^{\alpha-1} z^{\beta}, y^{\alpha} z^{\beta-1})+( \partial x^b/\partial x_{i}\ ; i=1,\dots, n)\,,
\]
and so $x^a y^{\alpha-1} z^{\beta-1}$ must be divisible by $\partial x^b/\partial x_{i}$ for some $i$. 
This contradicts the homogeneity of $F$. 
\end{proof}  

\begin{example}
A particular case of Theorem \ref{binofree} has recently been presented independently by Simis and
Tohaneanu \cite[Prop. 2.11]{STo}:

In our notation from the proof above, they take a homogeneous binomial of the form
$G=x^{a}y^{\alpha}+z^{\beta}$, with $\alpha > 0, |a|+\alpha=\beta$, and $a_{i}\neq 0$ for $i=2,...,n$ 
in $x^{a}=x_{1}^{a_{1}}\cdots x_{n}^{a_{n}}$, so that $G$ is homogeneous of degree $\beta$ and
the only potentially missing variable in the first summand is $x_{1}$. The authors then affirm
that
\begin{align*}
F&= x_{1}\cdots x_{n}(x^{a}y^{\alpha}+z^{\beta})&& \text{and}\\
F &= \frac{x_{1}\cdots x_{n}}{x_{i}}y(x^{a}y^{\alpha}+z^{\beta})&&\text{for some $i=1,...,n$,}
\end{align*}
are homogeneous free divisors. Theorem \ref{binofree} shows that in each case, $zF$ is a 
homogeneous free divisor as well.
\end{example}

\section{``Divisors" of Free Divisors}
The results of the previous sections show that:
\begin{enumerate}[\rm (1)]
\item Any reduced homogeneous binomial has a multiple that is a free divisor by Theorem 
\ref{binofree}. 
\item If $K$ is algebraically closed, then any quadric $Q$ can be put in standard form   
$x_1^2+\dots+x_{i}^2$. Hence it has a multiple that is a free divisor by Example \ref{sumpow}. 
\item If $f,g$ are free divisors in distinct sets of variables, then $f+g$ divides the free divisor $fg(f+g)$
by Corollary \ref{corofg}.
\end{enumerate} 

So we are led to ask:
\begin{question} Let $f$ be a (homogeneous) reduced polynomial. Does there exist a free divisor $g$ 
such that $f$ divides $g$?
\end{question} 
This question is also raised and adressed in \cite{DPi, MS, STo}. 

In light of the discussion above, the first  case to look at is that of cubics in $3$ variables. 
Again, by Example \ref{sumpow}, we know that the Fermat cubic $x^3+y^3+z^3$  divides the free divisor 
$(x^3+y^3)(x^3+y^3+z^3)$. So, what about other smooth cubics or smooth hypersurfaces in general? 
What we can prove is a negative result:  it asserts that a smooth form, in $n>2$ variables of degree 
larger than 2, times a product of $n$ linearly independent linear forms is never a free divisor. 

\begin{theorem} 
\label{divdiv1}  Let   $f$  be  a smooth form of degree $k=\deg f>2$ in $n>2$ variables and  
$\ell_1,\ell_2, \dots, \ell_n$  linearly independent linear forms. Set  $g=\ell_1\cdots \ell_nf$ and 
denote $J(g)\subseteq R=K[x_1,\dots,x_n]$ the Jacobian ideal of $g$. Then one has:
\begin{enumerate}[\quad\rm (1)]
\item $g$ is not  a free divisor, instead
\item $\depth R/J(g)\leq \min(\max(0, n-k), n/2) < n-2$.
\end{enumerate}
In particular,   if $k\geqslant n$ then $\depth R/J(g)=0$. 
\end{theorem}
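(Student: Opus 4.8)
The plan is to work after a linear change of coordinates that makes $\ell_i = x_i$, so that $g = x_1\cdots x_n f$ with $f$ a smooth form of degree $k > 2$. Since $g$ is visibly reduced (the $x_i$ are distinct coordinates and $f$, being smooth of degree $>2$, is not divisible by any $x_i$ — that would force a singular point), the Jacobian ideal is $J(g) = (g_1,\dots,g_n)+(g)$ and proving $g$ is not a free divisor amounts to showing $\depth R/J(g) < n-2$, i.e. part (2) implies part (1). So the whole theorem reduces to the depth estimate. First I would record the two pieces of the claimed bound separately: (i) $\depth R/J(g)\le n/2$ and (ii) $\depth R/J(g)\le \max(0,n-k)$; the conclusion is their minimum, and the final ``in particular'' is immediate since $k\ge n$ forces $\max(0,n-k)=0$.

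For bound (i), I would compute $g_i = \partial g/\partial x_i = \frac{g}{x_i} + x_1\cdots x_n\, f_i = x_1\cdots\widehat{x_i}\cdots x_n\,(f + x_i f_i)$. The plan is to localize at a well-chosen prime to produce many associated primes, or more efficiently to exhibit a large annihilator. Consider the ideal $\mathfrak{p}_{ij} = (x_i,x_j)$ for $i\ne j$: every generator $g_\ell$ of $J(g)$ with $\ell\ne i,j$ is divisible by $x_ix_j$, hence lies in $\mathfrak{p}_{ij}^2$, while $g_i, g_j$ and $g$ each lie in $\mathfrak p_{ij}$. A cleaner route: localize $R$ at the generic point of the codimension-$2$ linear space $\{x_i = x_j = 0\}$. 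Since $f$ restricted there is a nonzero form (smoothness), one checks $J(g)$ localized there is generated by the images of the ``leading'' parts and has the expected codimension $2$, which does not by itself give the bound. So instead I would use the standard device: $R/J(g)$ contains, as a submodule of its total ring of fractions or via an explicit syzygy count, a copy tied to the $\binom n2$ linear subspaces $V_{ij}=\{x_i=x_j=0\}$ — more precisely, I would show $g$ and all partials lie in $(x_ix_j)$ modulo terms that are forced, so that the scheme $\{g_1=\cdots=g_n=0\}$ set-theoretically contains each $V_{ij}$; since $\dim V_{ij} = n-2$, the singular scheme has dimension $\ge n-2$, giving $\depth R/J(g)\le \dim R/J(g)$ but we want an upper bound strictly below $n-2$, so this shows the codimension-$2$ part is pure but not the depth drop. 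The actual mechanism for (i) must be a local analysis at the origin: $J(g)$ is generated by $g$ and the $g_i$, all of which vanish to order $\ge n-1$ at $0$ except we can be sharper — the point $0$ is a very singular point of $\{g=0\}$, and I would compute $\depth (R/J(g))_{\mathfrak m}$ by showing the initial ideal (w.r.t. the standard grading) of $J(g)$ is contained in $\mathfrak m^{n-1}$-type ideals whose quotient has small depth, bounded by $n/2$ via a Koszul-type count on the $n$ near-monomial generators $x_1\cdots\widehat{x_i}\cdots x_n(f+x_if_i)$.

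For bound (ii), the plan is to exploit degree: each $g_i$ has degree $n-1+k$ but is divisible by the squarefree monomial $m_i := x_1\cdots\widehat{x_i}\cdots x_n$ of degree $n-1$, so $g_i = m_i h_i$ with $h_i = f + x_i f_i$ of degree $k$. The element $f = \frac1k E(f)$ (Euler, valid since $\mathrm{char}\,K = 0$) and the $h_i$ satisfy $\sum x_i(h_i - f) = \sum x_i^2 f_i$ which I would relate to $f$ via Euler again; more to the point, $(h_1,\dots,h_n)$ generate an ideal containing $f$ and the $x_if_i$, and since $f$ is smooth, $(f_1,\dots,f_n)$ is $\mathfrak m$-primary of colength governed by $k$. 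I would show $J(g)$ is sandwiched between ideals whose quotients have depth at most $n-k$ when $k<n$ (using that $n-k$ of the partials' worth of regularity is consumed), and depth $0$ when $k\ge n$ because then $J(g)$ becomes $\mathfrak m$-primary after inverting the $x_i$ — no, rather because $1\in J(g)$ localized away from the coordinate hyperplanes forces the singular locus into $\bigcup\{x_i=0\}$, and on each such hyperplane the restriction of $g$ is again of this type in fewer variables, so by induction on $n$ the depth collapses.

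\smallskip

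\emph{Main obstacle.} The genuinely delicate point is extracting the two-sided bound $\min(n/2,\max(0,n-k))$ rather than just ``not a free divisor.'' Showing $g$ is not free only needs $\depth R/J(g)\le n-3$, which follows from locating a single ``too-singular'' point; but the sharp estimate requires either (a) an honest free-resolution / local-cohomology computation of $R/J(g)$ at the origin, tracking how the $n$ generators $m_ih_i$ interact — the $m_i$ contribute a Koszul-like piece of projective dimension forcing $\depth \le n/2$ — or (b) a clean inductive reduction restricting to coordinate hyperplanes $\{x_i=0\}$, where $g$ restricts to $0$ but $g_i$ restricts to $m_i|_{x_i=0}\cdot f|_{x_i=0}$, a product of the same shape in $n-1$ variables, letting one run the degree bound down. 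I expect the $n/2$ bound to be the harder of the two and to come from a Buchsbaum–Rim / Koszul analysis of the subideal generated by the $m_i$ (whose associated primes are exactly the $\mathfrak p_{ij}$, of which there are $\binom n2$, capping the depth at $n/2$), with the $h_i$-factor only able to raise depth by the regularity of $f$, i.e. by at most something of order $k$, producing the $\max(0,n-k)$ term.
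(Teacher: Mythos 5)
Your reduction to $\ell_i=x_i$, the factorization $g_i=(x_1\cdots x_n/x_i)(f+x_if_i)$, and the observation that (2) implies (1) and the final claim all match the paper's setup, but beyond that the proposal is an announced plan whose two decisive ingredients are missing. First, you never establish a precise link between $\depth R/J(g)$ and an ideal you can actually control. The paper's Lemma \ref{divdiv2} does exactly this: writing $\hat f_i=f+x_if_i$, every syzygy of $\nabla g$ has, by the Hilbert--Burch description of the syzygies of the squarefree monomials $x_1\cdots x_n/x_i$ together with the fact that $x_i\nmid f$, the form $\langle x_1\beta_1,\dots,x_n\beta_n\rangle$ with $\langle\beta_1,\dots,\beta_n\rangle$ a syzygy of $\langle\hat f_1,\dots,\hat f_n\rangle$; the Euler relation then gives $(\hat f_1,\dots,\hat f_n)=(x_1f_1,\dots,x_nf_n)$, so $\depth R/J(g)=\depth R/(x_1f_1,\dots,x_nf_n)$. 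Your alternative of ``sandwiching'' $J(g)$ between auxiliary ideals is never made precise, and depth is not monotone under inclusion of ideals, so that route does not go through as stated.

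Second, both bounds are asserted rather than proved, and the mechanism you propose for the $n/2$ bound is incorrect: the ideal generated by the $m_i=x_1\cdots\widehat{x_i}\cdots x_n$ is perfect of codimension $2$ (its quotient has depth $n-2$), and the fact that it has $\binom{n}{2}$ associated primes $(x_i,x_j)$ of height $2$ caps neither projective dimension nor depth at $n/2$. In the paper the $n/2$ bound comes from Lemma \ref{divdiv8}: a minimal prime of $(x_if_i)_i$ containing $c$ of the variables must contain at least $n-c$ of the partials $f_i$, and smoothness makes both collections regular sequences, so every minimal prime has height at least $n/2$, whence $\depth R/(x_if_i)_i\leqslant\dim R/(x_if_i)_i\leqslant n/2$. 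The bound $\max(0,n-k)$ rests on the genuinely combinatorial fact (Proposition \ref{divdiv7}) that $x_1\cdots x_{\min(k,n)}\notin(x_1f_1,\dots,x_nf_n)$: for $k\leqslant n$ one compares coefficients in a putative identity $x_1\cdots x_k=\sum_i\lambda_ix_if_i$ with scalars $\lambda_i$, uses that smoothness forces, for each $i$, a monomial $x_i^{k-1}x_{j(i)}$ into the support of $f$, and shows the exponent vectors $(k-1)e_i+e_{j(i)}$ are linearly independent because the map $e_i\mapsto e_{j(i)}$ has only $0$ and roots of unity as eigenvalues while $k-1\geqslant 2$; then the colon observation $(f_i;\,i\in A)\subseteq(x_if_i)_i:x_A$ of Example \ref{exdiv} converts this non-membership into an associated prime of codimension $\min(k,n)$, giving $\depth\leqslant n-\min(k,n)=\max(0,n-k)$. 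None of these steps appear in your proposal; the degree heuristics and the unproved induction over coordinate hyperplanes do not supply them.
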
 
 
Since  $k>2$ and $n>2$ implies $\max(0, n-k) < n-2$, assertion (1) follows indeed from (2)
as claimed. To prove (2) in Theorem \ref{divdiv1}, we need to set up some notation. 
To avoid confusion, $\langle a_1,\dots,a_n  \rangle$ will denote the vector with coordinates 
$a_i$, while $(a_1,\dots,a_n)$ denotes the ideal or module generated by the $a_i$. 
For a form $f$, we set $\hat f_{i}=x_{i}f_{i}+f$, with $f_{i}=\partial f/\partial  x_{i} $ as before. 

\begin{lemma} 
\label{divdiv2} 
Let $f$ be a form in $K[x_1,\dots,x_n]$. If $g=x_1\cdots x_nf$ is reduced, then the ideals 
$J(g)$ and   $(x_{i}f_{i} \ ; i=1,\dots,n)$ of $R$ have the same projective dimension. 
In particular,  $g$ is a free divisor if, and only if, $(x_{i}f_{i}\ ; i=1,\dots,n)$ is perfect of codimension $2$. 
\end{lemma}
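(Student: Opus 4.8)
The plan is to relate the two ideals $J(g)$ and $I:=(x_if_i\ ;\ i=1,\dots,n)$ by an explicit change-of-generators argument and then transfer the homological invariant. First I would observe that, since $g=x_1\cdots x_nf$, the partial derivatives are $g_i = (x_1\cdots x_n/x_i)(x_if_i+f) = (x_1\cdots x_n/x_i)\hat f_i$, and that $g$ itself equals $x_1\cdots x_n f$. So $J(g)=(g_1,\dots,g_n)+(g)$ is generated by the $n+1$ elements $(x_1\cdots x_n/x_i)\hat f_i$ and $x_1\cdots x_n f$. The key point is that because $g$ is reduced, the monomial $x_1\cdots x_n$ is a non-zerodivisor, and moreover the localization/saturation behaviour at the coordinate hyperplanes lets one strip off the monomial factors without changing the projective dimension.

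\smallskip

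Concretely, the main step is to prove that $J(g)$ and $I$ have the same projective dimension over $R$. I would do this by showing both ideals have, up to the non-zerodivisor $x_1\cdots x_n$ and units, "the same" first syzygies. One clean route: note $x_1\cdots x_n\cdot\hat f_i = x_i\cdot g_i \in J(g)$ and $x_1\cdots x_n f = g \in J(g)$, while $\hat f_i = x_if_i + f$; using the Euler-type relation $\sum$ (or just the identity $\hat f_i - f = x_i f_i$, together with $\deg$-homogeneity giving $\sum x_i f_i = kf$, hence $\sum \hat f_i = (k+n)f$) one sees $f \in (\hat f_1,\dots,\hat f_n)$ when $k+n\neq 0$, so $(\hat f_1,\dots,\hat f_n) = (x_1f_1,\dots,x_nf_n, f)$. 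Thus $I + (f) = (\hat f_i\ ;\ i) + (f)$ and in fact $(\hat f_i\ ;\ i) \supseteq (f)$, so $(\hat f_i\ ;\ i) = I + (f)$. The claim then reduces to comparing $\operatorname{pd} I$, $\operatorname{pd}(I+(f))$, and $\operatorname{pd} J(g)$. For the last comparison I would use that $J(g) = (x_1\cdots x_n/x_i)\hat f_i\ ;\ i) + (x_1\cdots x_n f)$ is obtained from the ideal $(\hat f_i\ ;\ i)+(f)$ by multiplying the $i$-th generator by the monomial $x_1\cdots x_n/x_i$ and the last by $x_1\cdots x_n$; since these monomials are regular and the resulting ideal's free resolution can be obtained from that of $(\hat f_i\ ;\ i)+(f)$ by the corresponding diagonal rescaling of the last free module (the syzygies transform by $s_i \mapsto (x_i/x_1\cdots x_n)$-scaled versions, which stay polynomial exactly because $g$ is reduced, forcing the syzygies to be divisible appropriately), the projective dimension is unchanged. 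The passage from $I$ to $I+(f)$ I would handle by the standard fact that adding the element $f=\tfrac{1}{k}\sum x_if_i$ — which lies in the ideal generated by the $x_if_i$ after... wait, no: $f \notin I$ in general, but $f$ is in $I$ localized away from the coordinate axes; I would instead argue $\operatorname{pd}(I+(f)) = \operatorname{pd} I$ using that $V(I)$ and $V(I+(f))$ agree away from $\{x_1\cdots x_n=0\}$ and a depth/local-cohomology comparison, or more simply by exhibiting that $J(g)$ and $(x_if_i\ ;\ i)$ have equal saturations and the same associated primes, controlled by reducedness.

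\smallskip

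The cleanest argument I would actually try to write: let $N = x_1\cdots x_n$. One has $N \cdot J(g)' \subseteq$ stuff; better, I would compare the two \emph{modules} by noting the exact sequence relating $R/J(g)$ to $R/(x_if_i\ ;\ i)$ via multiplication by monomials and the hypothesis that $g$ is squarefree (which says precisely that $N$ and $f$ share no factor and $f$ is squarefree with $\{f=0\}$ meeting the coordinate hyperplanes properly — this is what makes all the "divide the syzygy by the monomial" steps legitimate). Then the final sentence of the lemma is immediate: by the Hilbert–Burch theorem $g$ is a free divisor iff $J(g)$ is perfect of codimension $2$ iff $\operatorname{pd} R/J(g) = 2$ iff $\operatorname{pd} R/(x_if_i\ ;\ i) = 2$ iff $(x_if_i\ ;\ i)$ is perfect of codimension $2$ (codimension being at most $2$ automatically, and the ideal being non-trivial).

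\smallskip

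The main obstacle I expect is making rigorous the claim that multiplying generators by the monomials $x_1\cdots x_n/x_i$ (and $x_1\cdots x_n$) does not change the projective dimension. This is \emph{false} for arbitrary ideals, so the entire weight of the argument rests on exploiting that $g$ is reduced — i.e. that $\gcd$ of the relevant data is trivial — to show every syzygy of the "monomial-scaled" ideal is divisible by the corresponding monomial, so the resolutions are genuinely isomorphic after a diagonal twist. I would localize at each minimal prime of $(N)$ (the coordinate hyperplanes) and check that there $J(g)$ and $(x_if_i\ ;\ i)$ behave compatibly, using that $f$ is a unit or well-behaved there; combined with the fact that away from $V(N)$ the two ideals literally coincide up to units, an Auslander–Buchsbaum / depth-counting argument on $R/J(g)$ versus $R/(x_if_i\ ;\ i)$ then gives the equality of projective dimensions. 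That localization-and-patching bookkeeping is the part that will require genuine care.
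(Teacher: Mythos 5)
Your skeleton is the same as the paper's: write $g_i=(x_1\cdots x_n/x_i)\hat f_i$ with $\hat f_i=x_if_i+f$, compare syzygies after stripping the monomial factors, and use Euler relations to relate $(\hat f_1,\dots,\hat f_n)$ to $(x_1f_1,\dots,x_nf_n)$. But the step on which, as you say yourself, ``the entire weight of the argument rests'' is never actually supplied. The missing idea is concrete: from a syzygy $\sum_i\alpha_i g_i=0$ one gets a syzygy $\sum_i(\alpha_i\hat f_i)\,y_i=0$ of the monomials $y_i=x_1\cdots x_n/x_i$, whose syzygy module is explicitly known (generated by $x_1e_1-x_ie_i$, e.g.\ by Hilbert--Burch, or just by reducing modulo $x_i$, or localizing at the prime $(x_i)$); hence $x_i\mid\alpha_i\hat f_i$, and since $g$ squarefree forces $x_i\nmid f$, so $x_i\nmid\hat f_i$, one concludes $x_i\mid\alpha_i$. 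This gives an isomorphism, via $e_i\mapsto x_ie_i$, between the syzygies of $(\hat f_1,\dots,\hat f_n)$ and those of $\nabla g$, and that is what transfers the projective dimension. Your proposed substitute --- the ideals agree off $V(x_1\cdots x_n)$, ``behave compatibly'' at the minimal primes of $(x_1\cdots x_n)$, then Auslander--Buchsbaum/depth patching --- does not work as stated: depth of $R/J(g)$ versus $R/(x_if_i;i)$ is a global invariant at the irrelevant maximal ideal and is not determined by generic agreement plus codimension-one local data (ideals with equal saturations can have quotients of different depth). Localization at $(x_i)$ is useful only as one way to prove the divisibility $x_i\mid\alpha_i$, not as a patching argument for projective dimension.

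The second gap is the Euler-relation bookkeeping, where you talk yourself out of the correct statement. In characteristic zero, with $k=\deg f\neq 0$, the Euler relation $\sum_ix_if_i=kf$ gives $f\in I=(x_if_i;i)$ outright, and $\sum_i\hat f_i=(k+n)f$ gives $f\in(\hat f_i;i)$; together these yield the exact equality of ideals $(\hat f_1,\dots,\hat f_n)=(x_1f_1,\dots,x_nf_n)$, which is what the paper uses. Your detour through $I+(f)$, saturations and local cohomology to compare $\operatorname{pd} I$ with $\operatorname{pd}(I+(f))$ is based on the mistaken claim that $f\notin I$ in general, and in any case is left unjustified. Once both points are repaired, the final equivalence (free divisor iff $(x_if_i;i)$ perfect of codimension $2$) goes through as you state, noting also that $g\in(g_1,\dots,g_n)$ by homogeneity so the extra generator $g$ of $J(g)$ is harmless.
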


\begin{proof}  Set  $y_i=x_1\cdots x_n/x_{i}$ and note that $g_i=y_i\hat f_{i}$. 
If $\langle \alpha_{1},\dots, \alpha_{n} \rangle$ is a syzygy  of $\nabla  g$, then 
$\langle\alpha_1 \hat f_1, \dots,  \alpha_{n} \hat f_n\rangle$ is thus a syzygy of 
$\langle y_1, \dots, y_n\rangle$. 
By the Hilbert--Burch Theorem, the syzygy module of $\langle y_1, \dots, y_n\rangle$ is generated 
by $x_1e_1-x_{i}e_i$ with $i=2,\dots,n$, whence there exist polynomials $a_2,\dots,a_n$ such that 
\begin{align*}
\alpha_1 \hat f_1&=(a_2+\dots+a_n)x_1&&\text{and}\\
\alpha_i \hat f_{i}&=-a_ix_{i} &&\text{for $i=2,\dots, n$.}
\end{align*} 
Since $g$ is  squarefree, $x_{i}$ does not divide $f$, whence that variable must divide 
$\alpha_i$  for each $i$. In other words,  $\alpha_i=x_{i}\beta_i$ for suitable $\beta_{i}\in R$, and 
then $\langle \beta_1, \dots, \beta_n\rangle$  is a syzygy of $\langle\hat f_1,  \dots,  \hat f_n\rangle$.
 
Therefore, the $R$-linear map $\psi: R^n\to R^n$ sending $e_i$ to $x_{i}e_i$ induces an isomorphism 
between the syzygy module of   $\langle\hat f_1,  \dots,  \hat f_n\rangle$ and the syzygy module of 
$\nabla g$. 

Because $f$ is homogeneous,  one has the Euler relation 
$f=\tfrac{1}{k}\sum_{i}x_{i}f_{i}$, whence 
\begin{align*}
(\hat f_{i}; i=1,...,n) \subseteq (x_{i} f_{i}; i=1,...,n)\,.
\end{align*}
Using the Euler relation once more, one obtains as well
$\sum_{i=1}^{n}\hat f_{i}= (\deg f +n)f$, thus, $f\in (\hat f_{i}; i=1,...,n)$, and then also
\begin{align*}
(x_{i} f_{i}; i=1,...,n)\subseteq (\hat f_{i}; i=1,...,n)\,. 
\end{align*}
Accordingly, these ideals agree.

It follows that the first syzygy module of the ideal $J(g)$ and that of the ideal $(x_1f_1, \dots, x_nf_n)$ 
differ only by a free summand --- whose rank is in fact the $K$-dimension of the vector space of 
Euler vector fields annihilating $f$.  So the statement follows. 
\end{proof} 

\begin{example}
\label{exdiv} 
Let us illustrate the preceding result.
\begin{enumerate}[\rm(a)]
\item Consider $f=\sum_{i=1}^k u_iM_i$ with $0\neq u_i\in K$, with $M_i$ pairwise  coprime 
monomials of same degree, and set $g=x_1\dots x_n f$. Then $\depth R/J(g)=n-k$, 
because here the ideal $(x_{i}f_{i})_{i=1,\dots, n}$ is the complete intersection ideal $(M_1,\dots,M_k)$.   
\item  Let   $f$ be the {\em Cayley form\/} in $n$ variables, the elementary symmetric polynomial of 
degree $n-1$, that can be written
\[
f = x_1\cdots x_n (x_1^{-1}+\dots+x_n^{-1})\,,
\]
and consider $g=x_1\cdots x_nf$.

Denoting $J_k$ the ideal generated by all square-free monomials of degree $k$, it is well known 
that $J_k$ is perfect of codimension $n-k+1$. The radical of the Jacobian ideal of $f$ is easily seen
to be $J_{n-2}$.  So  $f$ is irreducible and, for $n\geqslant 3$,  singular with  singular locus of
codimension $3$. 

On the other hand, one checks that $( x_{i}f_{i} ; i=1,\dots,n)=J_{n-1}$ and Lemma \ref{divdiv2} therefore
verifies that $g$ is a free divisor, as was also observed in \cite{MS}, where further a discriminant
matrix is given.

\item For a given form $f$, smooth and in generic coordinates, the elements $(x_{i}f_{i})_{i}$ tend 
to form a regular sequence.  In that case, the resolution of the first syzygy module of $J(g)$ is 
thus given by the corresponding tail of the Koszul complex on $(x_{i}f_{i})_{i}$, shifted in degree, and 
therefore $R/(x_{i}f_{i})_{i}$ embeds as the nonzero Artinian submodule 
$H^{0}_{(x_{i}; i=1,...,n)}(R/J(g))$ into $R/J(g)$, forcing
$\depth R/J(g)=0$. As a concrete example, take a Fermat hypersurface $f=\sum_{i=1}^{n}x_{i}^{k}$,
with $k\geqslant 1, n\geqslant 3$.

\item
\label{divdiv4} 
For a subset   $A$ of $\{1,\dots,n\}$, set $x_A = \Pi _{i\in A} x_{i}$. With notation as 
in Lemma \ref{divdiv2}, one obviously has 
\[
(f_{i}\ ; i\in A) \subseteq (x_{i}f_{i}\ ; i=1,\dots,n):(x_A)\,.
\]
Accordingly, either $x_A  \in (x_{i}f_{i} )_{i}$ or the projective 
dimension of  $R/(x_{i}f_{i})_{i}$ is at least the codimension of $R/(f_{i}\ ; i\in A)$.
In particular, if $\deg f>n$, then no such monomial is in 
$(x_{i}f_{i})_{i}$, and we see again that $\depth R/J(g)=0$.
\end{enumerate}
\end{example}
The last example leads to the following result.
\begin{proposition} 
\label{divdiv7}   Assume  $f\in R=K[x_1,\dots,x_n]$ with $n>2$ is smooth of degree $k>2$, and let
$\ell_1,\dots,\ell_n$ be linearly independent linear forms.  With $g=\ell_1\cdots \ell_n f$ one then has
\[
\depth R/J(g)\leqslant \max( 0,n-k)\,.
\] 
\end{proposition}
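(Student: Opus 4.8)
The plan is to transfer the problem to the ideal of Lemma~\ref{divdiv2} and then to locate an associated prime of $R/J(g)$ of large height. After applying the linear automorphism of $R$ carrying each $\ell_i$ to $x_i$ --- which sends $f$ to another smooth form of the same degree $k$ and transforms $J(g)$ into the Jacobian ideal of the image, hence preserves $\depth R/J(g)$ --- I may assume $g=x_1\cdots x_nf$. A smooth form is irreducible of degree $>1$, so $g$ is reduced; thus Lemma~\ref{divdiv2} applies and shows that $J(g)$ and $I:=(x_1f_1,\dots,x_nf_n)$ have the same projective dimension, whence by Auslander--Buchsbaum $\depth R/J(g)=\depth R/I$. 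It therefore suffices to prove $\depth R/I\le\max(0,n-k)$.

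For every subset $A\subseteq\{1,\dots,n\}$ one has, exactly as in Example~\ref{divdiv4}, the inclusion $(f_i\mid i\in A)\subseteq I:x_A$, where $x_A=\prod_{i\in A}x_i$. Because $f$ is smooth, the ideal $(f_1,\dots,f_n)$ is primary to $\mathfrak m=(x_1,\dots,x_n)$, so it has codimension $n$, and modding out the remaining $f_l$ ($l\notin A$) shows $(f_i\mid i\in A)$ has codimension at least $|A|$. Hence, if $x_A\notin I$, the nonzero class of $x_A$ in $R/I$ is annihilated by $(f_i\mid i\in A)$, so some associated prime of $R/I$ contains $(f_i\mid i\in A)$ and thus has height $\ge|A|$, forcing $\depth R/I\le n-|A|$. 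Consequently it is enough to exhibit one subset $A$ with $|A|=\min(n,k)$ and $x_A\notin I$, for then $\depth R/I\le n-\min(n,k)=\max(0,n-k)$.

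Producing such an $A$ is a short case analysis, using that $I$ is generated in degree $k$, so its degree-$k$ part is the $K$-span of $x_1f_1,\dots,x_nf_n$, of dimension at most $n$. If $k>n$, take $A=\{1,\dots,n\}$; then $\deg x_A=n<k$, so $x_A\notin I$. If $3\le k\le n-2$, then $\binom nk>n$, so the linearly independent squarefree monomials of degree $k$ cannot all lie in the $\le n$-dimensional span of the $x_if_i$, and some $A$ with $|A|=k$ works. If $k=n-1$ and no $A$ with $|A|=n-1$ works, then all $n$ squarefree monomials of degree $n-1$ span the degree-$(n-1)$ part of $I$; comparing which of these monomials are divisible by each $x_i$, and using the Euler relation, forces $f=\sum_{j}d_j\prod_{l\ne j}x_l$, and such an $f$ has $f$ and every $f_i$ vanishing at the coordinate point $e_1=(1,0,\dots,0)$ (here $k>2$ forces $n\ge4$, which is what makes the vanishing work) --- contradicting smoothness.

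The case $k=n$ is the crux and the step I expect to be the main obstacle: one must rule out an identity $x_1\cdots x_n=\sum_i c_ix_if_i$ with $c_i\in K$. Assume one holds. Restricting to the $x_m$-axis gives $c_m\cdot\bigl(\text{coefficient of }x_m^n\text{ in }f\bigr)=0$, so whenever $c_m\ne0$ this coefficient, and hence $f(e_m)$, vanishes. Differentiating the identity with respect to $x_j$ and evaluating at $e_m$ (where $\prod_{l\ne j}x_l$ vanishes, as $n\ge3$) gives $c_jf_j(e_m)+c_mf_{mj}(e_m)=0$ for all $j$; a direct monomial computation gives $f_{mj}(e_m)=(n-1)f_j(e_m)$ for $j\ne m$, so this reads $f_j(e_m)\bigl(c_j+(n-1)c_m\bigr)=0$. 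If $c_m\ne0$ then $f_m(e_m)=0$ as well, so smoothness (which forces $\nabla f(e_m)\ne0$ since $f(e_m)=0$) gives some $j\ne m$ with $f_j(e_m)\ne0$, whence $c_j=-(n-1)c_m$. This defines a fixed-point-free self-map of the nonempty set $\{m:c_m\ne0\}$ under which the associated value of $c$ is multiplied by $-(n-1)$; following a cycle of length $r\ge2$ yields $(-(n-1))^r=1$, which is impossible since $n-1\ge2$. This contradiction settles $k=n$ and completes the proof. (The family $f=\mathrm{const}\cdot x_1\cdots x_n$, which is not smooth, satisfies $x_1\cdots x_n\in I$, so smoothness really must be used in this last case; I see no way to fold it into the dimension count that handles $k\le n-1$.)
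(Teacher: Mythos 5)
Your proof is correct, and while the reduction is the same as the paper's --- change coordinates so $\ell_i=x_i$, pass to $I=(x_1f_1,\dots,x_nf_n)$ via Lemma~\ref{divdiv2}, and use the colon trick of Example~\ref{exdiv}(\ref{divdiv4}) to convert ``some squarefree monomial of degree $\min(k,n)$ lies outside $I$'' into the depth bound --- the heart of the argument, the non-membership for $k\leqslant n$, is handled quite differently. The paper gives one uniform argument for all $3\leqslant k\leqslant n$, showing the \emph{specific} monomial $x_1\cdots x_{\min(k,n)}$ is not in $I$: a scalar relation $x_1\cdots x_k=\sum_i\lambda_ix_if_i$ forces $\sum_i\lambda_i\alpha_i=0$ for every exponent vector $\alpha$ in the support of $f$ other than the squarefree one; smoothness supplies monomials $x_i^{k-1}x_{j(i)}$ in the support, and their exponent vectors $(k-1)e_i+e_{j(i)}$ are linearly independent because the $0$--$1$ matrix $h(e_i)=e_{j(i)}$ has only $0$ and roots of unity as eigenvalues, so $-(k-1)$ (of modulus $\geqslant 2$) is not among them; hence all $\lambda_i$ vanish, a contradiction. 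You instead split into cases: for $3\leqslant k\leqslant n-2$ a pure dimension count ($\binom{n}{k}>n$ versus the $\leqslant n$-dimensional degree-$k$ piece of $I$) produces \emph{some} admissible $A$, which is cheaper but does not identify the monomial; for $k=n-1$ you force $f$ to be a combination of squarefree degree-$(n-1)$ monomials and exhibit a singular point at $e_1$; and for $k=n$ your evaluation-at-$e_m$ cycle argument, giving $(-(n-1))^r=1$, is a discrete avatar of the paper's eigenvalue trick (both ultimately exploit that an integer of absolute value $\geqslant 2$ cannot be a root of unity, using characteristic zero). The trade-off: the paper's route is uniform, shorter at the boundary, and yields the sharper statement $x_1\cdots x_v\notin I$ (relevant to the remark following the theorem about whether $x_1\cdots x_n\notin(x_if_i)$ for every smooth $f$), whereas your route is more elementary in the middle range but must treat $k=n-1$ and $k=n$ by hand, the last being, as you say, where all the real work sits. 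All the steps you give check out, including the identities $f_{mj}(e_m)=(n-1)f_j(e_m)$ and the fixed-point-free self-map of $\{m:c_m\neq 0\}$.
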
 

\begin{proof}  Changing coordinates we may assume $\ell_i=x_{i}$. Set $v=\min(k,n)$. In view of 
Example \ref{exdiv}(\ref{divdiv4}) to Lemma \ref{divdiv2}, it is enough to show that 
$x_1\cdots x_v \not\in (x_1f_1,\dots,x_nf_n)$. If $k>n$ this is obvious. If $k\leq n$, then $v=k$, and 
we argue as follows. Suppose by contradiction that 
\begin{align}
\label{dis*}
\tag{$*$}
x_1\cdots x_k=\sum_i  \lambda_i x_{i} f_{i}
\end{align}
with $\lambda_i\in K$. Let $x_1^{\alpha_1}\cdots x_n^{\alpha_n}$ be a monomial in the support of 
$f$ that is different from $x_1\cdots x_k$. From (\ref{dis*}) it follows that 
$\sum_{i=1}^n \lambda_i \alpha_i=0$. If we show that the support of $f$ contains at least $n$ 
monomials different from $x_1\cdots x_k$ whose exponents are linearly independent, we can 
conclude that $\lambda_i=0$ for all $i$, thus, contradicting (\ref{dis*}).  
Since $f$ is smooth, for each $i$ there exists some $j=j(i)$, such that the  
monomial   $x_{i}^{k-1}x_{j}$ is in  the support of $f$. 

We claim that the exponents of $x_{i}^{k-1}x_{j(i)}$, for $i=1,\dots,n$, are indeed linearly independent.
To prove this, consider the linear map  $h:{\bf C}^n \to {\bf C}^n$ defined as $h(e_i)=e_{j(i)}$. Any
such map is easily seen to satisfy $(h^{n!}-1)h^n=0$, whence the eigenvalues of $h$ are either $0$ 
or roots of unity. In particular, no integer $m$ with $|m|>1$ is a root of the characteristic polynomial 
$\det(-tI+h)$ of $h$.  Therefore we have that   $\det(-tI+h)\neq 0$ at  $t=-k+1$, and this proves  the claim. 
\end{proof} 

As for a last ingredient, note the following.

\begin{lemma} 
\label{divdiv8}  If $f\in R=K[x_1,\dots,x_n]$ is smooth, then the codimension of $(x_{i}f_{i})_{i=1,...,n}$
is at least $n/2$.
\end{lemma}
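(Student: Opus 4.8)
The plan is to reduce the claim to a lower bound on heights of prime ideals. Writing $\mathfrak{a} := (x_1 f_1, \dots, x_n f_n)$, its codimension equals $\min\{\operatorname{ht} P : P \supseteq \mathfrak{a}\ \text{prime}\}$, so it suffices to prove $\operatorname{ht} P \geqslant n/2$ for an arbitrary prime $P$ containing $\mathfrak{a}$.

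Fix such a $P$ and split the variables according to it: set $A := \{\,i : x_i \in P\,\}$ and $B := \{1,\dots,n\}\setminus A$. For each $i \in B$ we have $x_i \notin P$, so from $x_i f_i \in P$ and primality we get $f_i \in P$. Hence $(x_i : i\in A) + (f_i : i\in B) \subseteq P$, and since the first summand is already a prime of height $|A|$, this gives at once $\operatorname{ht} P \geqslant |A|$ and $\operatorname{ht} P \geqslant \operatorname{codim}(f_i : i\in B)$.

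The substantive step, and the only place the hypothesis enters, is to show $\operatorname{codim}(f_i : i\in B) \geqslant |B|$. Smoothness of the form $f$ means precisely that its Jacobian ideal $(f_1,\dots,f_n)$ is $\mathfrak{m}$-primary, i.e.\ of codimension $n$; equivalently, the affine cones $V(f_i : i\in A)$ and $V(f_i : i\in B)$ in $\mathbb{A}^n$ meet only at the origin. The first of these cones is defined by $|A|$ equations, so by Krull's height theorem it has dimension at least $n-|A| = |B|$. On the other hand, two affine cones meeting only at $0$ have dimensions summing to at most $n$ — which is the projective dimension theorem applied to their projectivizations in $\mathbb{P}^{n-1}$ — so $\dim V(f_i : i\in B) \leqslant n - |B| = |A|$, that is, $\operatorname{codim}(f_i : i\in B) \geqslant |B|$. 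Combining with the previous paragraph, $\operatorname{ht} P \geqslant \max(|A|, |B|) \geqslant n/2$, as desired.

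The main (and essentially only non-formal) obstacle is that intersection-dimension estimate for the two cones; alternatively one may invoke the inequality $\operatorname{codim}(I+J) \leqslant \operatorname{codim} I + \operatorname{codim} J$, valid in the regular ring $R$, applied with $I = (f_i : i\in A)$, $J = (f_i : i\in B)$, $\operatorname{codim}(I+J) = n$ and $\operatorname{codim} I \leqslant |A|$. Everything else is bookkeeping. I would also record that the bound is optimal: for $n$ even the smooth quadric $f = x_1 x_2 + x_3 x_4 + \cdots + x_{n-1} x_n$ has $(x_i f_i)_i = (x_1 x_2, x_3 x_4, \dots, x_{n-1} x_n)$, a complete intersection of codimension precisely $n/2$.
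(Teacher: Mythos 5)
Your proof is correct and takes essentially the same route as the paper: for a prime $P\supseteq(x_if_i)_{i}$ you split the indices according to whether $x_i\in P$, observe that the remaining partials $f_i$ are then forced into $P$, and bound $\operatorname{ht}P$ from below by both counts, giving $\max(|A|,|B|)\geqslant n/2$. The only (immaterial) difference is in justifying $\operatorname{codim}(f_i : i\in B)\geqslant |B|$: the paper notes these $f_i$ form a regular sequence (being part of the regular sequence $f_1,\dots,f_n$ provided by smoothness), while you deduce it from the subadditivity of codimension in a regular ring, equivalently the projective dimension theorem for the two cones.
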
 

\begin{proof} Let $P$ be a minimal prime of $I=(x_{i}f_{i})_{i=1,\dots, n}$ in $R$. If $c$ is the 
number of variables $x_{i}$ contained in $P$, then that prime ideal contains at least $n-c$ of the 
$f_{i}$. Hence $P$ contains two  regular sequences: one  of length  $c$ and the other of length $n-c$. 
So the codimension of $I$ is at least $n/2$.  
\end{proof}

The 
{\em Proof of Theorem\/} \ref{divdiv1} is now obtained by combining  Lemma  \ref{divdiv2}, 
Proposition \ref{divdiv7}, and Lemma \ref{divdiv8}. \hfill\qed
\medskip

\begin{remark}
As far as  we know, in Example \ref{exdiv}(\ref{divdiv4}), it might be even true that for {\em any\/} 
smooth $f$ in {\em any\/} system of coordinates, $x_1\cdots x_n \not\in  (x_1f_1,\dots,x_nf_n)$,
so that then, in particular, always $\depth R/J(g)=0$. 

However, for a smooth $f$, the ideal $(x_{i}f_{i})_{i=1,\dots, n}$ can be of codimension 
$n/2$, but, of course, only for $n$ even. For example,
\[
f=(x_1^{k-1}+x_2^{k-1})x_2+(x_3^{k-1}+x_4^{k-1})x_4
\]
is smooth and the codimension of $(x_{i}f_{i})_{i=1,\dots, 4}$ is $2$. Nevertheless, 
in this case $R/(x_{i}f_{i})_{i=1,\dots, n}$ still has depth $0$ since 
$x_1x_2x_3x_4\not\in (x_{i}f_{i})_{i=1,\dots, n}$. 
\end{remark}

\section{Extending Free Divisors into the Tangent Bundle} 
Let $R=K[x_1,\dots,x_n]$ as before, and set $R'=R[y_1,\dots,y_n]$. Define a map $^*:R\to R'$ by   
$$f^*=\sum_{i=1}^n y_i \partial f/\partial x_i$$
for every $f\in R$.   Clearly $^*$ is a $K$-linear derivation. For a matrix $C=(c_{ij})$ with entries in 
$R$ we set $C^*=(c_{ij}^*)$. 

\begin{theorem}
\label{jets}
Let $f\in R$ be a homogeneous free divisor of degree $k>0$. Then $ff^*$ is a free divisor in $R'$, in 
$2n$ variables 
and of total degree $2k$, that  is linear if $f$ is so. 
\end{theorem}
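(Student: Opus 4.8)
The plan is to produce an explicit $2n\times 2n$ discriminant matrix for $ff^*$ and verify Saito's criterion (Theorem~\ref{saito}). Let $A$ be a discriminant matrix for $f$ over $R$, so $\det A = f$ and $(\nabla_x f)A \equiv 0 \bmod (f)$; recall $A$ can be taken with linear entries if $f$ is linear. Since $f$ is homogeneous of degree $k$, one column of $A$ may be taken to be the Euler vector field, i.e.\ $A e_1 = \langle x_1,\dots,x_n\rangle^T$. Writing the $2n$ variables as $x_1,\dots,x_n,y_1,\dots,y_n$ and gradients accordingly, note that $\partial(ff^*)/\partial x_j = f_j f^* + f\,\partial f^*/\partial x_j$ and $\partial(ff^*)/\partial y_j = f f_j$, since $\partial f^*/\partial y_j = f_j$. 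So the $y$-part of $\nabla(ff^*)$ is just $f\cdot\nabla_x f$, which is already $\equiv 0 \bmod (f)$ trivially — this is the slack that makes the construction work.

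The block matrix I would try is
\begin{align*}
B = \begin{pmatrix} A & 0 \\ A^* & A \end{pmatrix},
\end{align*}
of size $2n\times 2n$. Its determinant is $(\det A)^2 = f^2$, which is \emph{not} $ff^*$, so this naive guess needs one correction: replace the first column of the lower-right block $A$ by the Euler field in the $y$-variables, $\langle y_1,\dots,y_n\rangle^T$, or better, observe that since $f^* = \sum y_i f_i$ is itself, up to the factor $k$, obtained by applying the derivation $^*$ to $f$ and $f$ is Euler-homogeneous, one has $f^* = \frac1k(\text{something})$... — more robustly, I would instead take the lower-left block to be $A^*$ and modify the top-left block's Euler column so that cofactor expansion yields exactly one factor of $f$ and one of $f^*$. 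Concretely: columns $2,\dots,n$ of $B$ come from pairing a logarithmic field $D$ of $f$ with its "lift" $D + (\text{correction in }\partial_{y}) $; the first column should encode the vector field $E = \sum x_i\partial_{x_i} + 2\sum y_i\partial_{y_i}$ scaled so that $E(ff^*) = (k + 2(k-1)+ \text{(degree of }f^*\text{ in }y))ff^* $... the point is $ff^*$ is homogeneous (degree $k$ in $x$ alone for $f$, degree $k-1$ in $x$ and $1$ in $y$ for $f^*$), hence Euler-homogeneous in all $2n$ variables, so $ff^*\in J(ff^*)$ and a genuine Euler column exists.

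The key computation, and the main obstacle, is checking that $(\nabla(ff^*))B \equiv 0 \bmod (ff^*)$ for the $n$ "lifted" columns. For a logarithmic field $D = \sum c_j \partial_{x_j}$ of $f$ with $D(f) = \lambda f$, its lift should be $\tilde D = \sum c_j \partial_{x_j} + \sum d_j \partial_{y_j}$ where the $d_j$ are chosen so that $\tilde D(f^*) \in (f, f^*)$; differentiating $f^* = \sum y_i f_i$ gives $\tilde D(f^*) = \sum y_i D(f_i) + \sum d_i f_i = \sum y_i \partial_{x_i}(D(f)) - \sum y_i [\partial_{x_i}, D](f)\text{-terms} + \sum d_i f_i$, and using $D(f)=\lambda f$ plus the commutator identity for the coefficient derivations, one finds the natural choice $d_j = -\sum_i y_i \partial c_i/\partial x_j + (\text{multiple of }f_j)$ makes $\tilde D(f^*)$ a combination of $f^*$ and $\sum y_i f_i \cdot(\dots)$, landing in $(f^*)+(f)$. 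I expect this to work because $^*$ is a derivation commuting appropriately with homogeneity; the bookkeeping of which cofactor expansions produce $f\cdot f^*$ versus $f^2$ is where care is needed, and I would organize it by first doing the case where $f = x_1$ (so $ff^* = x_1 y_1$, a normal crossing, manifestly free) and the product-of-variables case, then bootstrap via the block structure. The linearity claim is immediate once $B$ is exhibited: if $A$ has linear entries then $A^*$ has constant entries and the Euler columns are linear, so $B$ has linear entries, and $ff^*$ has degree $2n$ in $2n$ variables as required.
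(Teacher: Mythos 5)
Your overall strategy -- an explicit $2n\times 2n$ Saito matrix in block form, with the derivative matrix of a Saito matrix of $f$ in the lower-left block and Euler-type columns producing the factorization $\det = f\cdot f^*$ -- is the same as the paper's, but the proposal never closes the two steps on which the proof actually turns. First, the matrix is never pinned down: you correctly reject $\bigl(\begin{smallmatrix} A & 0\\ A^* & A\end{smallmatrix}\bigr)$ because its determinant is $f^2$, and the fix you mention in passing (replace the Euler column of the lower-right $A$ by $\langle y_1,\dots,y_n\rangle^T$) is essentially the paper's matrix $\bigl(\begin{smallmatrix} B & x^T & 0 & 0\\ B^* & 0 & B & y^T\end{smallmatrix}\bigr)$, but you then drift off without committing. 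Second, and more seriously, the key verification for the ``lifted'' columns is not carried out and your candidate correction terms are wrong: for $D=\sum_j c_j\partial_{x_j}$ the right lift is $\tilde D = D + \sum_j c_j^*\,\partial_{y_j}$ with $c_j^*=\sum_i y_i\,\partial c_j/\partial x_i$, not $d_j=-\sum_i y_i\,\partial c_i/\partial x_j$ (wrong sign and transposed indices). Moreover, even with the correct lift, lifting an arbitrary logarithmic field fails: if $D(f)=\lambda f$ with $\lambda$ non-constant, one computes $\tilde D(ff^*)=2\lambda ff^* + \lambda^* f^2$, which need not lie in $(ff^*)$. The point you are missing is that the columns must be chosen as \emph{exact} syzygies of $\nabla f$: since $f$ is homogeneous, $J(f)=(f_1,\dots,f_n)$ is perfect of codimension $2$, so by Hilbert--Burch there is an $n\times(n-1)$ matrix $B$ with $\nabla_x(f)B=0$ on the nose and with signed maximal minors the $f_i$; then the derivation property of $^*$ gives $\nabla_x(f^*)B+\nabla_x(f)B^* = (\nabla_x(f)B)^* = 0$ exactly, and the remaining columns (the $x$-Euler column, the vertical copies of $B$, and the $y$-Euler column) are handled by the Euler relations for $f$ and for $f^*$ (which has degree $k-1$ in $x$). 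Your suggestion to ``first do $f=x_1$ and bootstrap via the block structure'' is not a proof mechanism for freeness.

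Two further gaps: Saito's criterion requires $ff^*$ to be reduced, and you never check this; the paper does so by observing that $f^*$, being of degree $1$ in the $y$'s, could only be reducible if the partials of $f$ had a common factor, contradicting $f$ reduced. Also, your remark that ``$A^*$ has constant entries'' when $A$ is linear is false -- $\ell^*=\sum_i y_i\,\partial\ell/\partial x_i$ is linear in the $y$'s -- though the linearity conclusion for the big matrix survives.
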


\begin{proof} First note  that $ff^*$ is reduced because  $f^*$ is irreducible. By contradiction, if $f^*$ 
were reducible then, since $f^*$ is homogeneous of degree $1$ in the $y$'s,  the partial derivatives 
of $f$ had a non-trivial common factor contradicting the fact that $f$ is reduced.  

Secondly we identify a discriminant matrix for $ff^*$. Since $f$ is homogeneous, a discriminant 
matrix for $f$ can be constructed as follows. Because $J(f)$ is a perfect ideal of codimension $2$, 
we can find a Hilbert-Burch matrix $B=(b_{ij})$ for $J(f)$, of size  $n\times (n-1)$, such that the 
$(n-1)$-minor of $B$ obtained by removing the $i$-th row is 
$(-1)^{i+1}\partial f/\partial x_i$. 

Adjoining $x^T=(x_1,\dots,x_n)^T$ as a column to the matrix $B$, we obtain the matrix
$$
A =(B\mid x^T)
$$
that is by construction a discriminant matrix for $f$. We now claim that  the following $2n\times 2n$ 
block matrix 
$$
A'= \left(\begin{array}{ccccc}
B  & x^T &\vline & 0 & 0 \\
\hline
\vphantom{y^{T^{1}}}B^*& 0   &\vline  & B & y^T
\end{array}\right) 
$$
is a discriminant matrix for $ff^*$.
Its determinant is clearly $ff^*$ by definition of $A, B$ and $f^{*}$. 
The product rule yields
\begin{align*}
\nabla(ff^*)&=f^*(\nabla_x(f),0)+f(\nabla_x(f^*), \nabla_x(f))\,,
\intertext{and hence} 
\nabla(ff^*)A'&=f^*(\nabla_x(f),0)A'+f(\nabla_x(f^*),\nabla_x(f))A'\,.
\end{align*}
Now $(\nabla_x(f),0)A'=(\nabla_x(f)A,0)\equiv 0 \bmod (f)$, and so it remains to show that 
\begin{align}
\label{disff*}
\tag{$\dagger$}
(\nabla_x(f^*),\nabla_x(f))A'\equiv 0 \bmod (f^*)\,.
\end{align}
Expanding returns the vector
\begin{align*}
(\nabla_x(f^*),\nabla_x(f))A'=(\nabla_x(f^*)B+\nabla_x(f)B^*, \nabla_x(f^*)x^T, \nabla_x(f)B,  
\nabla_x(f)y^T)\,.
\end{align*}
Concerning its first part, note that $\nabla_x(f^*)=\nabla_x(f)^*$, whence
\begin{align*}
\nabla_x(f^*)B+\nabla_x(f)B^* &= (\nabla_x(f)B)^*&&\text{because $^*$ is a derivation,}\\
&=0^{*}=0&& \text{as $\nabla_x(f)B=0$ by construction.} 
\intertext{Regarding the second component,}
\nabla_x(f^*)x^T &=(k-1)f^{*}\equiv 0 \bmod(f^*)\,,
\intertext{because $f^*$ is homogeneous of degree $k-1$ with respect to the variables $x$. 
Finally,}
\nabla_x(f)B&=0&&
\text{by choice of $B$, and}\\  
\nabla_x(f)y^T&=f^*&&\text{by definition.}
\end{align*}
Therefore, (\ref{disff*}) holds and $ff^{*}$ is confirmed as a free divisor. The assertions on degree
and number of variables are obvious from the construction.

A free divisor is linear if all entries in a discriminant matrix are linear, and this property 
is clearly inherited by $A'$ from $A$.
\end{proof} 
 
\begin{remark}
The geometric interpretation of the hypersurface defined by $ff^{*}$ is as follows.

Viewing $f\in R$ as the function $f\colon \Spec R=\AA_{K}^{n}\to \AA_{K}^{1}=\Spec K[t]$, 
its differential fits into the exact Zariski--Jacobi sequence of K\"ahler differential forms
\[
\xymatrix{
0&\ar[l] \Omega^{1}_{K[t]/R}&\ar[l]\Omega^{1}_{R/K}\cong \oplus_{i}Rdx_{i}&&
\ar[ll]_-{df\partial/\partial t}\Omega^{1}_{K[t]/K}\otimes_{K[t]}R\cong Rdt}
\]
and one may interpret $R'\cong \Sym_{R}\Omega^{1}_{R}$ as the ring of 
regular functions on the tangent bundle $T_{X} \cong \Spec R'\cong \AA^{2n}_{K}$ over 
$X=\Spec R\cong \AA^{n}_{K}$. 

This identifies $R'/(f^{*})$ with the regular functions on the total space of the affine relative tangent 
``subbundle'' $T_{X/S}\subseteq T_{X}$, the kernel of the Jacobian map $df:T_{X}\to T_{S}$ that 
consists of the vector fields vertical with respect to (the fibres of) $f$ over the affine line $S=\Spec K[t]$. 

Accordingly, the hypersurface $H$ defined by $ff^{*}$ is the union of that affine ``bundle'' with 
$\Spec R'/(f)$, the restriction of the total tangent bundle $T_{X}$ to 
$\Spec R/(f)$, in turn the fibre over $0$ of the function $f$. Equivalently, $\Spec R'/(f)$ is the 
suspended free divisor obtained as the inverse image of $\Spec R/(f)$ along the structure morphism
$p: T_{X}\to X$. Thus, $H=T_{X/S}\cup \Spec R'/(f)= df^{-1}(0)\cup(fp)^{-1}(0)\subseteq T_{X}$.

\begin{align*}
\xymatrix{
&\ar@{^{(}->}[dl]T_{X/S}\ar@{_{(}->}[rd]\ar[rr]&&\{0\}\ar@{_{(}->}[rd]\\
\ \hphantom{H}H\ar@{^{(}->}[rr]&&T_{X}\ar[rr]^-{df}\ar[rd]&&T_{S}\ar[rd]\\
&\ar@{_{(}->}[ul](fp)^{-1}(0)\ar[rd] \ar@{^{(}->}[ru]&&X\ar[rr]^-{f}&&S\\%\Spec R'/(f){=}
&&f^{-1}(0)\ar@{^{(}->}[ru]\ar[rr]&&\{0\}\ar@{^{(}->}[ru]%\Spec R/(f)=
}
\end{align*}
Interesting examples are hard to visualize as they will live in four or more dimensions. However, the
intersection of the two (unions of) components,  $T_{X/S}\cap \Spec R'/(f)\subseteq \Sing H$ is easy to 
understand: Geometrically, over $X$ it fibres into the union of the hyperplanes perpendicular to 
$\nabla f(x)$ for some $x\in X$ on $\{f=0\}$, that is,
\begin{align*}
T_{X/S}\cap \Spec R'/(f) = 
\bigcup_{x, f(x)=0}\left\{(x,y)\in \AA^{n}\times\AA^{n}\mid \nabla f(x)y=0\right\}\,.
\end{align*}
\end{remark}

\begin{example}   
Applying Theorem \ref{jets} to the normal crossing divisor  $x_1\cdots x_n$ we find that
$$(x_1\cdots x_n)^2\sum_{i=1}^n  \frac{y_i}{x_i}$$ 
is a linear free divisor.   
\end{example} 
 
\begin{remarks}  
\label{remff*}
Various generalizations are possible:
\begin{enumerate}[\rm (1)] 
\item
\label{remff*1}
Given a homogeneous free divisor $f$ in a polynomial ring of dimension $n$, 
one can iterate the use of Theorem \ref{jets} to get an infinite family 
$\{ F_i\}_{i\in \NN}$ of homogeneous free divisors, defined by $F_0=f$ and $F_{i+1}=F_iF_i^*$,
where $^{*}$ is, of course, to be understood relative to the polynomial ring containing $F_{i}$. 
 By construction, $F_i$ belongs to a polynomial ring of dimension $2^in$, its degree equals 
$(i+1)\deg f$, and it is a linear free divisor if, and only if, $f$ is linear. 

Taking $F_{0}=x$ as a seed, we obtain the sequence of linear free divisors
\begin{gather*}
x\,, xy\,, xy(xz_{1}+yz_{2})\,, \\
xy(xz_{1}+yz_{2})(2xyz_{1}u_{1}+y^{2}z_{2}u_{1}+ x^{2}z_{1}u_{2}+2xyz_{2}u_{2}
+x^{2}yu_{3}+xy^{2}u_{4})\,,...
\end{gather*}
in $K[x,y,z_{1}, z_{2}, u_{1},..., u_{4},...]$.
\item
\label{remff*2}
Theorem \ref{jets} holds also for free divisors that are weighted homogeneous of degree $d\neq 0$
with respect to  some weight vector $w=(w_1,\dots,w_n)\in \ZZ^n$. In the proof one simply replaces 
the column vector $x^T$ in the discriminant matrix with $(w_1x_1,\dots,w_nx_n)^T$. 
Again, linearity is preserved.
\end{enumerate}
 \end{remarks} 

One can further generalize Theorem \ref{jets}, as well as Remark \ref{remff*}(\ref{remff*1}), also 
as follows, incorporating right away the weighted homogeneous version as in 
Remark \ref{remff*}(\ref{remff*2}).

\begin{theorem}
With notation as before, assume $f$ weighted homogeneous of degree $d\neq 0$
with respect to  some weight vector $w=(w_1,\dots,w_n)\in \ZZ^n$.

With $m\geqslant 1$, let $R'=R[y_{ij} : 1\leq i\leq n, 1\leq j\leq m]$, assign weights $|y_{ij}|=w_{i}$, 
and set $f^{\{*_{j}\} }=\sum_i y_{ij}\partial f/\partial x_i$. Then $f\prod_{j=1}^m f^{\{*_{j}\}}$ is a 
free divisor in $(m+1)n$ variables of weighted homogeneous degree $(m+1)d$ that will be 
linear along with $f$.
\end{theorem}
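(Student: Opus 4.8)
The plan is to extend the argument of Theorem~\ref{jets} essentially verbatim, exhibiting an explicit $(m+1)n\times(m+1)n$ discriminant matrix $A'$ over $R'$ for $F:=f\prod_{j=1}^{m}f^{\{*_{j}\}}$ and invoking Saito's criterion, Theorem~\ref{saito}. Write $*_{j}\colon R'\to R'$ for the $K$-derivation $g\mapsto\sum_{i}y_{ij}\,\partial g/\partial x_{i}$, so $f^{\{*_{j}\}}=*_{j}(f)$, and let $C^{*_{j}}$ denote the entrywise image of a matrix $C$ over $R$. First I would check that $F$ is reduced. Since $f$ is reduced, the $f_{i}=\partial f/\partial x_{i}$ have no common non-unit factor (such a factor would divide $f=\tfrac1d\sum_{i}w_{i}x_{i}f_{i}$, forcing either $f$ non-squarefree or an irreducible factor of $f$ to divide its own partials, impossible in characteristic zero), so each $f^{\{*_{j}\}}=\sum_{i}y_{ij}f_{i}$, being of degree one in the variables $y_{1j},\dots,y_{nj}$, is irreducible, just as in Theorem~\ref{jets}. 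Distinct $f^{\{*_{j}\}}$ use disjoint sets of $y$'s and are non-constant, hence are pairwise coprime and coprime to $f$; thus $F$ is squarefree. By the product principle recalled at the beginning of Section~4, it then suffices to produce an $(m+1)n\times(m+1)n$ matrix over $R'$ with determinant $F$ each of whose columns represents a vector field logarithmic for $f$ and for every $f^{\{*_{k}\}}$ separately.

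For the matrix, as in the proof of Theorem~\ref{jets} and Remark~\ref{remff*}(\ref{remff*2}), choose a Hilbert--Burch matrix $B$ of size $n\times(n-1)$ for $J(f)=(f_{1},\dots,f_{n})$ with $(n-1)$-minor $(-1)^{i+1}f_{i}$ upon deleting row $i$, so that $(\nabla f)B=0$. Set $W_{0}^{T}=(w_{1}x_{1},\dots,w_{n}x_{n})^{T}$, $W_{j}^{T}=(w_{1}y_{1j},\dots,w_{n}y_{nj})^{T}$, $Y_{j}^{T}=(y_{1j},\dots,y_{nj})^{T}$, and let $A'$ have row blocks indexed by $0,1,\dots,m$, each of height $n$, with columns: (i) $n-1$ columns whose $\ell$-th one carries the $\ell$-th column of $B$ in row block $0$ and the $\ell$-th column of $B^{*_{j}}$ in row block $j$; (ii) one ``Euler'' column carrying $W_{0}^{T}$ in row block $0$ and $W_{j}^{T}$ in row block $j$; (iii) for each $j=1,\dots,m$, a block of $n$ columns supported only in row block $j$, equal there to $(B\mid Y_{j}^{T})$. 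For $m=1$ this reproduces the matrix $A'$ of Theorem~\ref{jets}, up to replacing its column $(x^{T},0)^{T}$ by $(W_{0}^{T},W_{1}^{T})^{T}$. Since row block $0$ is supported on the first $n$ columns, $A'$ is block upper triangular with diagonal blocks $(B\mid W_{0}^{T})$ and $(B\mid Y_{j}^{T})$; expanding each along its last column and using the Euler relation gives $\det(B\mid W_{0}^{T})=(-1)^{n+1}df$ and $\det(B\mid Y_{j}^{T})=(-1)^{n+1}f^{\{*_{j}\}}$, whence $\det A'=\pm d\,F$. Rescaling one column of type (i) by $1/d$, and flipping a sign if necessary, we may assume $\det A'=F$.

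Next I would verify the columns. A column $(c;d_{1};\dots;d_{m})$, $c,d_{j}\in R^{n}$, gives a field $D$ with $D(f)=(\nabla f)c$ and, using $\partial f^{\{*_{k}\}}/\partial x_{\ell}=*_{k}(f_{\ell})$ and $\partial f^{\{*_{k}\}}/\partial y_{\ell j}=\delta_{jk}f_{\ell}$, with $D(f^{\{*_{k}\}})=(\nabla f)^{*_{k}}c+(\nabla f)d_{k}$. For a type-(i) column, $c$ is a column of $B$ and $d_{k}=c^{*_{k}}$, so $D(f)=(\nabla f)c=0$ and $D(f^{\{*_{k}\}})=*_{k}\big((\nabla f)c\big)=*_{k}(0)=0$ by the Leibniz rule. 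For the Euler column, $D$ is precisely the Euler vector field of the extended $w$-grading on $R'$ in which $|x_{i}|=|y_{ij}|=w_{i}$; since $f$ and every $f^{\{*_{k}\}}$ are $w$-homogeneous of degree $d$ in that grading, $D(f)=df$ and $D(f^{\{*_{k}\}})=df^{\{*_{k}\}}$. For a type-(iii) column in block $j$, $c=0$ and $d_{k}=0$ unless $k=j$, while $D(f^{\{*_{j}\}})$ equals either $(\nabla f)\cdot(\text{column of }B)=0$ or $(\nabla f)Y_{j}^{T}=f^{\{*_{j}\}}$; either way $D(f^{\{*_{k}\}})\in(f^{\{*_{k}\}})$. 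Hence every column of $A'$ is logarithmic for $f$ and for each $f^{\{*_{k}\}}$, hence for $F$, so by Theorem~\ref{saito} $F$ is a free divisor; it has $w$-degree $d+md=(m+1)d$ in $(m+1)n$ variables, and when $f$ is linear one takes $B$ linear as in Theorem~\ref{jets}, whereupon $B^{*_{j}}$, $W_{0}^{T}$, $W_{j}^{T}$, $Y_{j}^{T}$ are all linear and $F$ is linear too.

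The part requiring care — the only thing beyond transcription of the $m=1$ proof — is the Euler column. In the weighted setting $\partial f/\partial x_{i}$ is $w$-homogeneous of the \emph{varying} degree $d-w_{i}$, so the naive choice $(W_{0}^{T},0,\dots,0)^{T}$, the literal analogue of the column used in Theorem~\ref{jets}, is logarithmic for $f$ but in general not for the $f^{\{*_{j}\}}$; one must use the full Euler field of the extended grading, which succeeds precisely because each $f^{\{*_{j}\}}$ is again $w$-homogeneous of degree $d$. Everything else — reducedness, the block structure and determinant, and the Leibniz computation for the type-(i) columns — is the $m=1$ argument with one extra index $j$ carried along.
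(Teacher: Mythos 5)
Your proof is correct, and it follows the paper's overall strategy: assemble a block Saito matrix from a Hilbert--Burch matrix $B$ for $(f_1,\dots,f_n)$ and apply Theorem~\ref{saito}, exactly as in Theorem~\ref{jets}. The genuine divergence is the Euler column, and there your choice is not just stylistic but a repair. The paper's argument is only a sketch: the displayed matrix for $m=2$ carries the literal transcription of Remark~\ref{remff*}(2), namely the column $(wx^T,0,\dots,0)^T$ together with last columns $wy_j^T$. As you point out, in the genuinely weighted case the $x$-only Euler field is not logarithmic for $f^{\{*_j\}}$ (it produces $\sum_i(d-w_i)y_{ij}f_i$), and moreover that matrix has determinant $\pm d\,f\prod_j\bigl(\sum_i w_iy_{ij}f_i\bigr)$ rather than a constant times $f\prod_j f^{\{*_j\}}$; it works verbatim only when all the weights coincide (e.g.\ the standard homogeneous case). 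Your replacement --- the full Euler field of the extended grading $|y_{ij}|=w_i$ as the $n$-th column, with the unweighted columns $Y_j^T$ on the diagonal blocks --- handles arbitrary weights uniformly, and your columnwise verification (each column logarithmic for $f$ and for every factor $f^{\{*_k\}}$ separately, via $\bigl((\nabla f)c\bigr)^{*_k}=0$ for the Leibniz columns) is an equivalent, slightly reorganized, form of the paper's direct expansion of $\nabla(ff^*)A'$ in Theorem~\ref{jets}. Two cosmetic remarks: your matrix is block \emph{lower} triangular (row block $0$ vanishes on the type-(iii) columns), not upper, though the determinant computation is unaffected; and the linearity claim rests, exactly as in the paper's own proof of Theorem~\ref{jets}, on the possibility of choosing $B$ with linear entries when $f$ is a linear free divisor.
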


\begin{proof}
The proof is a simple variation of the one given for $m=1$. For instance, if $m=2$, the discriminant
matrix can be taken as
$$
\left(\begin{array}{ccccccccc}
\vphantom{B^{T^{\{1\}}}}B  & wx^T &\vline & 0  & 0    &\vline &      0 & 0 \\
\hline
\vphantom{B^{T^{\{1\}}}}B^{\{*_{1}\}}& 0   &\vline  & B & wy_1^T &\vline & 0 & 0 \\
\hline
\vphantom{B^{T^{\{1\}}}}B^{\{*_{2}\}}& 0   &\vline  &  0 & 0&\vline  & B & wy_2^T
\end{array}\right) 
$$
where $wx=(w_1x_1,\dots,w_nx_n)$, with $wy_{1}, wy_{2}$ analogous abbreviations.
\end{proof}

In this way, one may obtain any normal crossing divisor $x_{0}\cdots x_{m}$, starting from 
$f=x_{0}$ and using $f^{\{*_{j}\} }= x_{j}\partial f/\partial x_{0}=x_{j}$ for $j=1,...,m$.

\end{document}